 \newtheorem{theorem}{Theorem}[section]
 \newtheorem{proposition}[theorem]{Proposition}
 \newtheorem{lemma}[theorem]{Lemma}
\theoremstyle{definition}
\theoremstyle{remark}
\begin{document}
\title[Bargmann-type Transforms]{Bargmann-type transforms and modified harmonic oscillators}
\author[H.~Chihara]{Hiroyuki Chihara}
\address{College of Education, University of the Ryukyus, Nishihara, Okinawa 903-0213, Japan}
\email{aji@rencho.me}
\thanks{Supported by the JSPS Grant-in-Aid for Scientific Research \#19K03569.}
\subjclass[2000]{Primary 47B35; Secondary 47B32, 47G30}
\keywords{Bargmann-type transforms, Segal-Bargmann spaces, 
Berezin-Toeplitz quantization, generalized Hermite functions, modified harmonic oscillators}
\begin{abstract}
We study some complete orthonormal systems on the real-line. 
These systems are determined by Bargmann-type transforms, 
which are Fourier integral operators with complex-valued quadratic phase functions. 
Each system consists of eigenfunctions for a second-order elliptic differential operator like the Hamiltonian of the harmonic oscillator. 
We also study the commutative case of a certain class of systems of second-order differential operators called the non-commutative harmonic oscillators. By using the diagonalization technique, we compute the eigenvalues and eigenfunctions for the commutative case of the non-commutative harmonic oscillators. 
Finally, we study a family of functions associated with an ellipse in the phase plane.  
We show that the family is a complete orthogonal system on the real-line.  
\end{abstract}
\maketitle
\section{Introduction}
\label{section:introduction}
We are concerned with the Bargmann-type transform defined by 
\begin{equation}
T_hu(z)
=
C_\phi
h^{-3/4}
\int_\mathbb{R}
e^{i\phi(z,x)/h}
u(x)
dx, 
\quad
z\in\mathbb{C},  
\label{equation:bargmann}
\end{equation}
where 
$\phi(z,x)$ 
is a complex-valued quadratic phase function of the form 
$$
\phi(z,x)
=
\frac{A}{2}z^2
+
Bzx
+
\frac{C}{2}x^2, 
\quad
A,B,C \in \mathbb{C} 
$$
with assumptions $B\ne0$ and $\operatorname{Im}C>0$, 
and 
$C_\phi=2^{-1/2}\pi^{-3/4}\lvert{B}\rvert(\operatorname{Im}C)^{-1/4}$. 
Throughout of the present paper, 
we deal with only the one-dimensional case for the sake of simplicity. 
It is possible to discuss higher dimensional case, and we omit the detail. 
Note that the integral transform \eqref{equation:bargmann} 
is well-defined for tempered distributions on $\mathbb{R}$ 
since 
$\operatorname{Re}\bigl(i\phi(z,x)\bigr)=\mathcal{O}(-\operatorname{Im} C x^2/3)$ 
for $\lvert{x}\rvert \rightarrow \infty$. 
\par
These integral transforms were introduced by Sj\"ostrand (See, e.g., \cite{sjoestrand}). 
He developed microlocal analysis based on them. 
One can see \eqref{equation:bargmann} 
as a global Fourier integral operator 
associated with a linear canonical transform 
$\kappa_T:\mathbb{C}^2\ni\bigl(x,-\phi^\prime_x(z,x)\bigr)\mapsto\bigl(z,\phi^\prime_z(z,x)\bigr)\in\mathbb{C}^2$, 
that is, 
\begin{equation}
\kappa_T:
\mathbb{C}^2
\ni
(x,\xi) 
\mapsto 
\left(-\frac{Cx+\xi}{B},Bx-\frac{A(Cx+\xi)}{B}\right)
\in
\mathbb{C}^2.  
\label{equation:kappa}
\end{equation}
If we set 
$\Phi(z)=\displaystyle\max_{x\in\mathbb{R}}\operatorname{Re}\bigl(i\phi(z,x)\bigr)$, 
then we have 
\begin{equation}
\Phi(z)
=
\frac{\lvert{Bz}\rvert^2}{4\operatorname{Im}C}
-
\operatorname{Re}
\left\{
\frac{(Bz)^2}{4\operatorname{Im}C}
+
\frac{Az^2}{2i}
\right\},
\label{equation:Phi} 
\end{equation}
$$
\kappa_T(\mathbb{R}^2)
=
\left\{
\left(z,\frac{2}{i}\frac{\partial \Phi}{\partial z}(z)\right)
\ \bigg\vert \ 
z\in\mathbb{C}
\right\}. 
$$
This means that the singularities of a tempered distribution $u$ 
described in the phase plane $\mathbb{R}^2$ are translated into those of $T_hu$ 
in the I-Lagrangian submanifold $\kappa_T(\mathbb{R}^2)$. 
The microlocal analysis of Sj\"ostrand is based on the equivalence of 
the Weyl quantization on $\mathbb{R}$, 
the Weyl quantization on $\kappa_T(\mathbb{R})$, 
and the Berezin-Toeplitz quantization on $\mathbb{C}$. 
For more detail about them, see \cite{sjoestrand} or \cite{chihara1}. 
\par
Let $L^2(\mathbb{R})$ be the set of all square-integrable functions on $\mathbb{R}$, 
and let $L^2_\Phi(\mathbb{C})$ be the set of all square-integrable functions 
on $\mathbb{C}$ with respect to a weighted measure $e^{-2\Phi(z)/h}L(dz)$, 
where $L$ is the Lebesgue measure on $\mathbb{C}\simeq\mathbb{R}^2$. 
We denote by $\mathscr{H}_\Phi(\mathbb{C})$ the set of all entire functions 
in $L^2_\Phi(\mathbb{C})$. 
It is well-known that $T_h$ gives a Hilbert space isomorphism of $L^2(\mathbb{R})$ 
onto $\mathscr{H}_\Phi(\mathbb{C})$, that is, 
$$
(T_hu,T_hv)_{L^2_\Phi}
=
(u,v)_{L^2} 
\quad
u,v \in L^2(\mathbb{R}), 
$$
where 
$$
(U,V)_{L^2_\Phi}
=
\int_\mathbb{C}
U(z)\overline{V(z)}
e^{-2\Phi(z)/h}
L(dz), 
\quad
U,V \in L^2_\Phi(\mathbb{C}), 
$$
$$
(u,v)_{L^2}
=
\int_{\mathbb{R}}
u(x)\overline{v(x)}
dx
\quad
u,v \in L^2(\mathbb{R}).
$$
We sometimes denote $(U,V)_{L^2_\Phi}$ for $U,V \in \mathscr{H}_\Phi(\mathbb{C})$ by 
$(U,V)_{\mathscr{H}_\Phi}$. 
The inverse mapping $T_h^\ast$ is given by 
$$
T_h^\ast U(x)
=
C_\phi
\int_\mathbb{C}
e^{-i \overline{\phi(z,x)}/h} 
U(z)e^{-2\Phi(z)/h} 
L(dz), 
\quad
x\in\mathbb{R}, 
\quad
U \in \mathscr{H}_\Phi(\mathbb{C}).
$$
Note that $T_h^\ast$ is well-defined for $U \in L^2_\Phi(\mathbb{C})$. 
$T_h{\circ}T_h^\ast$ becomes an orthogonal projector of 
$L^2_\Phi(\mathbb{C})$ onto $\mathscr{H}_\Phi(\mathbb{C})$. 
More concretely, 
\begin{equation}
T_h{\circ}T_h^\ast
U(z)
=
\frac{C_\Phi}{h}
\int_\mathbb{C}
e^{2\Psi(z,\bar{\zeta})/h}
U(\zeta)
e^{-2\Phi(\zeta)/h}
L(d\zeta), 
\quad
U \in L^2_\Phi(\mathbb{C}), 
\label{equation:projector}
\end{equation}
where $C_\Phi=\lvert{B}\rvert^2/(2\pi \operatorname{Im}C)$, 
and $\Psi(z,\zeta)$ is a holomorphic quadratic function defined by the critical value of 
$-\{\phi(z,X)-\overline{\phi(\bar{\zeta},\bar{X})}\}/2i$ 
for $X\in\mathbb{C}$, that is, 
\begin{equation}
\Psi(z,\zeta)
=
\frac{\lvert{B}\rvert^2z\zeta}{4\operatorname{Im} C}
-
\frac{B^2z^2+\bar{B}^2\zeta^2}{8\operatorname{Im} C}
-
\frac{Az^2-\bar{A}\zeta^2}{4i}. 
\label{equation:Psi}
\end{equation}
In particular, $U=T_h{\circ}T_h^\ast U$ for $U \in \mathscr{H}_\Phi(\mathbb{C})$, 
and $\mathscr{H}_\Phi(\mathbb{C})$ becomes a reproducing kernel Hilbert space. 
\par
Here we recall elementary facts related with the classical Bargmann transform $B_h$  
which is the most important example of $T_h$. 
This was introduced by Bargmann in \cite{bargmann}. 
We can also refer \cite{folland} for this.  
The Bargmann transform $B_h$ on $\mathbb{R}$ is defined by 
$$
B_h(z)
=
2^{-1/2}(\pi h)^{-3/4}
\int_\mathbb{R} 
e^{-(z^2/4-zx+x^2/2)/h}
u(x)
dx, 
\quad
z\in\mathbb{C}. 
$$
Note that the integral kernel of $B_h$ is the generating function of Hermite functions. 
We denote $L_\Phi(\mathbb{C})$ and $\mathscr{H}_\Phi(\mathbb{C})$ for $B_h$ 
by $L^2_B(\mathbb{C})$ and $\mathscr{H}_B(\mathbb{C})$ respectively, those are, 
$$
L^2_B(\mathbb{C})
=
\left\{
U(z) \ \bigg\vert \ 
\int_\mathbb{C}
\lvert{U(z)}\rvert^2
e^{-\lvert{z}\rvert^2/2h}
L(dz)
< \infty
\right\},
$$
and 
$\mathscr{H}_B(\mathbb{C})=\{U(z) \in  L^2_B(\mathbb{C})\ \vert \ \partial U/\partial \bar{z}=0\}$. 
The Bargmann projector, which is the orthogonal projection of 
$L_B^2(\mathbb{C})$ onto $\mathscr{H}_B(\mathbb{C})$, is given by 
$$
B_h{\circ}B_h^\ast U(z)
=
\frac{1}{2\pi h}
\int_\mathbb{C}
e^{z\bar{\zeta}/2h}
U(\zeta)
e^{-\lvert\zeta\rvert^2/2h}
L(d\zeta), 
\quad
U \in L^2_B(\mathbb{C}).
$$ 
In view of the Taylor expansion of the reproducing kernel 
$e^{z\bar{\zeta}/2h}/(2\pi h)$, 
the formula $U=B_h{\circ}B_h^\ast U$ for $U \in \mathscr{H}_B(\mathbb{C})$ becomes 
$$
U(z)
=
\sum_{n=0}^\infty
(U,\varphi_{B,n})_{\mathscr{H}_B} \varphi_{B,n}(z), 
\quad
\varphi_{B,n}(z)
=
\frac{z^n}{\sqrt{\pi (2h)^{n+1} n!}},
\quad
n=0,1,2,\dotsc.
$$
A family of functions $\{\varphi_{B,n}\}_{n=0}^\infty$ 
is a complete orthonormal system of $\mathscr{H}_B(\mathbb{C})$ 
since $\mathscr{H}_B(\mathbb{C})$ is the set of all entire functions 
belonging to $L^2_B(\mathbb{C})$. 
\par
We shall see more detail about $\{\varphi_{B,n}\}_{n=0}^\infty$. 
We set for $U \in \mathscr{H}_B(\mathbb{C})$ 
$$
\Lambda_BU(z)
:=
2h
\frac{\partial U}{\partial z}(z), 
\quad
\Lambda_B^\ast U(z)
:=
zU(z). 
$$
Actually, $\Lambda_B^\ast$ is the adjoint of $\Lambda_B$ on $\mathscr{H}_B(\mathbb{C})$. 
Elementary computation gives 
\begin{equation}
(\Lambda_B^\ast\circ\Lambda_B+h)\varphi_{B,n}
=
(\Lambda_B\circ\Lambda_B^\ast-h)\varphi_{B,n}
=
(2n+1)h\varphi_{B,n},
\quad
n=0,1,2,\dotsc.
\label{equation:harmonic0} 
\end{equation}
We shall pull back these facts on $\mathbb{R}$ by using $B_h^\ast$. 
Set 
$$
\phi_{B,n}(x):=B_h^\ast\varphi_{B,n}(x),
\quad
P_B:=B_h^\ast \circ \Lambda_B \circ B_h, 
\quad
P_B^\ast:=B_h^\ast \circ \Lambda_B^\ast \circ B_h.   
$$
$\phi_{B,n}$ is said to be the $n$-th Hermite function, 
and a family $\{\phi_{B,n}\}_{n=0}^\infty$ is 
a complete orthonormal system of $L^2(\mathbb{R})$ 
since $B_h$ is a Hilbert space isomorphism of 
$L^2(\mathbb{R})$ onto $\mathscr{H}_B(\mathbb{C})$. 
Operators 
$$
P_B
=
h\frac{d}{dx}+x,
\quad
P_B^\ast
=
-h\frac{d}{dx}+x
$$
are said to be annihilation and creation operators respectively. 
Note that 
$$
\varphi_{B,n}=((2h)^nn!)^{-1/2}(\Lambda_B^\ast)^n\varphi_{B,0}, 
\quad
n=0,1,2,\dotsc,
$$
$$
\phi_{B,0}(x):=B_h^\ast\varphi_{B,0}(x)=\frac{1}{(\pi h)^{1/4}}e^{-x^2/2h}.
$$
Then we have so-call the Rodrigues formula
$$
\phi_{B,n}(x)
=
\frac{1}{\sqrt{(2h)^nn!}}
(P_B^\ast)^n
\phi_{B,0}(x)
=
\frac{(-1)^n}{(\pi h)^{1/4}\sqrt{(2h)^nn!}}
e^{-x^2/2h}
\left(h\frac{d}{dx}\right)^n
e^{-x^2/h}.
$$
Set $H_B=P_B^\ast \circ P_B + h = P_B \circ P_B^\ast - h$. Then 
$$
H_B= -h^2\frac{d^2}{dx^2}+x^2
$$
which is said to be the Hamiltonian of the harmonic oscillator. 
The equation \eqref{equation:harmonic0} becomes 
$$
H_B\phi_{B,n}=(2n+1)h\phi_{B,n}, 
\quad
n=0,1,2,\dotsc.
$$
Thus the $n$-th Hermite function $\phi_{B,n}$ is 
an eigenfunction of $H_B$ for the $n$-th eigenvalue $(2n+1)h$. 
\par
The purpose of the present paper is to study the generalization of the known facts 
on the usual Bargmann transform $B_h$. 
The plan of this paper is as follows. 
In Section~\ref{section:modified} 
we study the general Bargmann-type transform \eqref{equation:bargmann}, 
and obtain generalized annihilation and creation operators, 
the Hamiltonian of the generalized harmonic oscillator and its eigenvalues, 
generalized Hermite functions and the Rodrigues formula. 
In Section~\ref{section:noncommutative} 
we study a $2\times2$-system of second-order ordinary differential operators, 
which is said to be a non-commutative harmonic oscillators. 
More precisely, 
we study the commutative case of the non-commutative harmonic oscillators, 
and obtain the eigenvalues and eigenfunctions 
by using our original elementary computation. 
Finally in Section~\ref{section:ellipeses} 
we study the general Bargmann-type transform \eqref{equation:bargmann} 
which might be related with ellipses in the phase plane $\mathbb{R}^2$. 
\section{Modified harmonic oscillators and Hermite functions}
\label{section:modified}
In this section we study the general form of the Bargmann-type transform 
\eqref{equation:bargmann}. 
We remark that the choice of the constant $A$ in the phase function is not essential. 
We can choose 
$$
A
=
-
\frac{iB^2}{2\operatorname{Im} C}. 
$$ 
Then 
\eqref{equation:Phi} and \eqref{equation:Psi} become very simple as 
$$
\Phi(z)
=
\frac{\lvert{Bz}\rvert^2}{4\operatorname{Im} C},
\quad
\Psi(z,\bar{\zeta})
=\frac{Bz \cdot \overline{B\zeta}}{4\operatorname{Im} C} 
$$
respectively. 
Moreover, the orthogonal projector \eqref{equation:projector} 
of $L^2_\Phi(\mathbb{C})$ onto $\mathscr{H}_\Phi(\mathbb{C})$,  
and the I-Lagrangian submanifold \eqref{equation:kappa} become 
$$
T_h \circ T_h^\ast U(z)
=
\frac{C_\Phi}{h}
\int_{\mathbb{C}}
e^{Bz \cdot \overline{B\zeta}/2h \operatorname{Im} C}
U(\zeta)
e^{-\lvert{Bz}\rvert^2/2h \operatorname{Im} C}
L(d\zeta), 
$$
$$
\kappa_T(\mathbb{R}^2)
=
\left\{
\left(z,\frac{\lvert{B}\rvert^2}{2i\operatorname{Im} C}\bar{z}\right)
\ \bigg\vert \ 
z\in\mathbb{C}
\right\} 
$$
respectively. 
Recall $U(z)=T_h \circ T_h^\ast U(z)$ for all $U \in \mathscr{H}_\Phi(\mathbb{C})$. 
If we consider the Taylor expansion of 
$e^{-\lvert{Bz}\rvert^2/2h \operatorname{Im} C}$, 
we have for $U \in \mathscr{H}_\Phi(\mathbb{C})$, 
$$
U(z)
=
\sum_{n=0}^\infty
(U,\varphi_n)_{\mathscr{H}_\Phi} 
\varphi(z),
$$
$$
\varphi_n(z)
=
\frac{\lvert{B}\rvert}{\sqrt{2\pi h \operatorname{Im} C}}
\cdot
\frac{1}{\sqrt{n!}}
\cdot
\left(\frac{Bz}{\sqrt{2h \operatorname{Im} C}}\right)^n,
\quad
n=0,1,2,\dotsc.
$$
\begin{theorem}
\label{theorem:cons21} 
The family of monomials $\{\varphi_n\}_{n=0}^\infty$ 
is a complete orthonormal system of $\mathscr{H}_\Phi(\mathbb{C})$.  
\end{theorem}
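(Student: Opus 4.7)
The plan is to reduce the problem to the classical Fock-space picture via the substitution $w = Bz/\sqrt{2h\operatorname{Im}C}$. Under this change of variable the weight becomes $e^{-2\Phi(z)/h} = e^{-|w|^2}$ and $L(dz) = (2h\operatorname{Im}C/|B|^2)\,L(dw)$, while $\varphi_n$ is, up to a global constant, the monomial $w^n/\sqrt{\pi n!}$ on the $w$-plane. Thus orthonormality and completeness can be read off from the corresponding facts for the standard Fock space, but it is just as quick to verify both directly.

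First I would check orthonormality by writing $w = re^{i\theta}$ in polar coordinates. The angular integral $\int_0^{2\pi} e^{i(m-n)\theta}\,d\theta$ vanishes unless $m = n$, and the radial integral reduces to $\int_0^\infty r^{2n+1}e^{-r^2}\,dr = n!/2$. Collecting the normalization constants of $\varphi_m,\varphi_n$ gives $(\varphi_m,\varphi_n)_{\mathscr{H}_\Phi} = \delta_{mn}$.

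For completeness I would invoke the reproducing formula $U(z) = T_h\circ T_h^\ast U(z)$ in its simplified form displayed just above the statement. Expanding the kernel
$$
e^{Bz\cdot\overline{B\zeta}/(2h\operatorname{Im}C)} = \sum_{n=0}^\infty \frac{1}{n!}\left(\frac{Bz}{\sqrt{2h\operatorname{Im}C}}\right)^n\overline{\left(\frac{B\zeta}{\sqrt{2h\operatorname{Im}C}}\right)^n}
$$
as a power series, I would interchange sum and integral against $U(\zeta)e^{-2\Phi(\zeta)/h}L(d\zeta)$ to obtain $U(z) = \sum_{n=0}^\infty (U,\varphi_n)_{\mathscr{H}_\Phi}\varphi_n(z)$ pointwise in $z$. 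Combined with orthonormality and Bessel's inequality, this delivers convergence in $\mathscr{H}_\Phi(\mathbb{C})$ and Parseval's identity, so $\{\varphi_n\}_{n=0}^\infty$ is a complete orthonormal system.

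The only subtle point is justifying the term-by-term integration. I would dominate the partial sums of the series in $\zeta$ by $\exp(|Bz|\cdot|B\zeta|/(2h\operatorname{Im}C))|U(\zeta)|e^{-|B\zeta|^2/(2h\operatorname{Im}C)}$ and then apply Cauchy-Schwarz with respect to the measure $e^{-2\Phi(\zeta)/h}L(d\zeta)$; the $U$-factor contributes $\|U\|_{\mathscr{H}_\Phi}$, while the remaining Gaussian factor is integrable with a bound depending only on $|z|$. This is the main (but essentially routine) technical obstacle; once it is in place, the theorem follows.
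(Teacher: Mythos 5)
Your proof is correct and follows essentially the same route as the paper's: orthonormality by direct evaluation of the Gaussian moment integral (the paper integrates by parts in $\bar{z}$ where you pass to polar coordinates after rescaling, a cosmetic difference), and completeness via the reproducing-kernel expansion that the paper derives just before the theorem statement and then dismisses as ``obvious.'' Your justification of the term-by-term integration is more careful than the paper's, but the substance is the same.
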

\begin{proof}
The completeness is obvious. We have only to show that 
$(\varphi_m,\varphi_n)_{\mathscr{H}_\Phi}=\delta_{mn}$, 
where $\delta_{mn}$ is Kronecker's delta. 
Without loss of generality we may assume that $m \geqslant n$. 
By using the integration by parts and 
the change of variable 
$\zeta=Bz/\sqrt{h \operatorname{Im} C}$, we deduce that 
\begin{align*}
  (\varphi_m,\varphi_n)_{\mathscr{H}_\Phi}
& =
  \frac{\lvert{B}\rvert^2}{2\pi h \operatorname{Im} C}
  \cdot
  \frac{1}{\{m!n!(2h \operatorname{Im} C)^{m+n}\}^{1/2}}
\\
& \times
  \int_\mathbb{C}
  (Bz)^m(\overline{Bz})^n
  e^{-\lvert{Bz}\rvert^2/2h \operatorname{Im} C}
  L(dz)
\\
& =
  \frac{\lvert{B}\rvert^2}{2\pi h \operatorname{Im} C}
  \cdot
  \frac{1}{\{m!n!(2h \operatorname{Im} C)^{m+n}\}^{1/2}}
\\
& \times
  \int_\mathbb{C}
  \left\{
  \left(-\frac{2h \operatorname{Im} C}{\bar{B}} \frac{\partial}{\partial \bar{z}}\right)^m
  e^{-\lvert{Bz}\rvert^2/2h \operatorname{Im} C}
  \right\}
  (\overline{Bz})^n
  L(dz)
\\
& =
  \frac{\lvert{B}\rvert^2}{2\pi h \operatorname{Im} C}
  \cdot
  \frac{1}{\{m!n!(2h \operatorname{Im} C)^{m+n}\}^{1/2}}
\\
& \times
  \int_\mathbb{C}
  \left\{
  \left(-\frac{2h \operatorname{Im} C}{\bar{B}} \frac{\partial}{\partial \bar{z}}\right)^m
  (\overline{Bz})^n
  \right\}
  e^{-\lvert{Bz}\rvert^2/2h \operatorname{Im} C}
  L(dz)
\\
& =
  \delta_{mn}
  \frac{\lvert{B}\rvert^2}{2\pi h \operatorname{Im} C}
  \cdot
  \frac{1}{m!(2h \operatorname{Im} C)^m}
\\
& \times
  \int_\mathbb{C}
  \left\{
  \left(-\frac{2h \operatorname{Im} C}{\bar{B}} \frac{\partial}{\partial \bar{z}}\right)^m
  (\overline{Bz})^m
  \right\}
  e^{-\lvert{Bz}\rvert^2/2h \operatorname{Im} C}
  L(dz)
\\
& =
  \delta_{mn}
  \frac{\lvert{B}\rvert^2}{2\pi h \operatorname{Im} C}
  \int_\mathbb{C}
  e^{-\lvert{Bz}\rvert^2/2h \operatorname{Im} C}
  L(dz)
\\
& =
  \delta_{mn}
  \frac{1}{2\pi}
  \int_\mathbb{C}
  e^{-\lvert\zeta\rvert^2/2}
  L(d\zeta)
  =
  \delta_{mn}.
\end{align*}
This completes the proof.  
\end{proof}
Set $\phi_n(x)=T_h^\ast\varphi_n(x)$, $n=0,1,2,\dotsc$. 
Since $T_h$ is a Hilbert space isomorphism of $L^2(\mathbb{R})$ 
onto $\mathscr{H}_\Phi(\mathbb{C})$, we have the following. 
\begin{theorem}
\label{theorem:cons22}
$\{\phi_n\}_{n=0}^\infty$ is a complete orthonormal system of $L^2(\mathbb{R})$.  
\end{theorem}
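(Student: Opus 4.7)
The plan is to transfer the orthonormality and completeness of $\{\varphi_n\}$ from $\mathscr{H}_\Phi(\mathbb{C})$ (Theorem~\ref{theorem:cons21}) back to $L^2(\mathbb{R})$ via the isometric map $T_h^\ast$, exploiting that $T_h$ is a Hilbert space isomorphism of $L^2(\mathbb{R})$ onto $\mathscr{H}_\Phi(\mathbb{C})$. Since $T_h^\ast$ is the inverse of $T_h$ on $\mathscr{H}_\Phi(\mathbb{C})$ and is itself a Hilbert space isomorphism from $\mathscr{H}_\Phi(\mathbb{C})$ onto $L^2(\mathbb{R})$, the image $\{\phi_n\} = \{T_h^\ast \varphi_n\}$ of a complete orthonormal system is automatically a complete orthonormal system.

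For the orthonormality, I would start from $(\phi_m,\phi_n)_{L^2} = (T_h^\ast \varphi_m, T_h^\ast \varphi_n)_{L^2}$ and apply the isometry identity $(T_hu,T_hv)_{L^2_\Phi} = (u,v)_{L^2}$ with $u=T_h^\ast\varphi_m$ and $v=T_h^\ast\varphi_n$. The right-hand side becomes $(T_h T_h^\ast \varphi_m, T_h T_h^\ast \varphi_n)_{L^2_\Phi}$, and since $\varphi_m,\varphi_n \in \mathscr{H}_\Phi(\mathbb{C})$ and $T_h T_h^\ast$ acts as the identity on $\mathscr{H}_\Phi(\mathbb{C})$ (noted in the introduction as a consequence of the reproducing-kernel formula \eqref{equation:projector}), this reduces to $(\varphi_m,\varphi_n)_{\mathscr{H}_\Phi} = \delta_{mn}$ by Theorem~\ref{theorem:cons21}.

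For completeness, I would argue by contrapositive: suppose $u \in L^2(\mathbb{R})$ satisfies $(u,\phi_n)_{L^2} = 0$ for every $n \geqslant 0$. Rewriting this as $(u, T_h^\ast \varphi_n)_{L^2} = 0$ and again invoking the identity $(T_h u, T_h T_h^\ast \varphi_n)_{L^2_\Phi} = (u, T_h^\ast\varphi_n)_{L^2}$, I obtain $(T_h u, \varphi_n)_{\mathscr{H}_\Phi} = 0$ for all $n$. Since $T_h u \in \mathscr{H}_\Phi(\mathbb{C})$ and $\{\varphi_n\}$ is complete there, $T_h u = 0$, and because $T_h$ is injective, $u = 0$.

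There is no real obstacle in this argument; it is a routine consequence of the unitarity of $T_h$. The only minor point to handle carefully is the direction of the adjoint relation, i.e.\ verifying that the isometry $(T_h u,T_h v)_{L^2_\Phi} = (u,v)_{L^2}$ combined with the reproducing identity $T_h T_h^\ast = \mathrm{id}$ on $\mathscr{H}_\Phi(\mathbb{C})$ yields $(T_h u, V)_{L^2_\Phi} = (u, T_h^\ast V)_{L^2}$ for every $V \in \mathscr{H}_\Phi(\mathbb{C})$, which is what both steps above rely upon.
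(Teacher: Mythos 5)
Your argument is correct and is exactly the paper's approach: the paper derives Theorem~\ref{theorem:cons22} in one line from Theorem~\ref{theorem:cons21} and the fact that $T_h$ is a Hilbert space isomorphism of $L^2(\mathbb{R})$ onto $\mathscr{H}_\Phi(\mathbb{C})$, which is precisely the unitarity argument you spell out. Your version merely makes the routine transfer of orthonormality and completeness explicit.
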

In what follows we study the family of functions $\{\phi_n\}_{n=0}^\infty$ in detail. 
Let $\Lambda$ be a linear operator on $\mathscr{H}_\Phi(\mathbb{C})$ defined by 
$$
\Lambda U(z)
=
\frac{2h \operatorname{Im}C}{\lvert{B}\rvert^2}
\frac{dU}{dz}(z),
\quad
U \in \mathscr{H}_\Phi(\mathbb{C}). 
$$
Its Hilbert adjoint is 
$$
\Lambda^\ast U(z)
=
zU(z),
\quad
U \in \mathscr{H}_\Phi(\mathbb{C}). 
$$
We call $\Lambda$ and $\Lambda^\ast$ annihilation and creation operators on 
$\mathscr{H}_\Phi(\mathbb{C})$ respectively. 
Since $\varphi_n$ is a monomial of degree $n$, 
we have for $n=0,1,2,\dotsc$ 
\begin{equation}
\varphi_n(z)
=
\frac{1}{\sqrt{n!}}
\left(\frac{B}{\sqrt{2h\operatorname{Im}C}}\right)^n
\bigl(\Lambda^\ast\bigr)^n
\varphi_0(z),
\label{equation:rodrigues200} 
\end{equation}
\begin{equation}
\left(
\Lambda^\ast\circ\Lambda
+
\frac{h\operatorname{Im}C}{\lvert{B}\rvert^2}
\right)
\varphi_n(z)
=
\left(
\Lambda\circ\Lambda^\ast
-
\frac{h\operatorname{Im}C}{\lvert{B}\rvert^2}
\right)
\varphi_n(z)
=
\frac{h\operatorname{Im}C}{\lvert{B}\rvert^2}
(2n+1)
\varphi_n(z).
\label{equation:harmonic200} 
\end{equation}
We shall pull back these facts by using $T_h^\ast$. 
Set 
$$
P := T_h^\ast \circ \Lambda \circ T_h, 
\quad
P^\ast := T_h^\ast \circ \Lambda^\ast \circ T_h, 
$$
$$
H
:= 
P^\ast \circ P
+
\frac{h\operatorname{Im}C}{\lvert{B}\rvert^2}
=
P \circ P^\ast
-
\frac{h\operatorname{Im}C}{\lvert{B}\rvert^2}.
$$
\par
To state the concrete form of $H$, we introduce the Weyl pseudodifferential operators. 
For an appropriate function $a(x,\xi)$ of $(x,\xi)\in\mathbb{R}^2$, 
its Weyl quantization is defined by 
$$
\operatorname{Op}_h^\text{W}(a)u(x)
=
\frac{1}{2\pi h}
\iint_{\mathbb{R}^2}
e^{i(x-y)\xi/h}
a\left(\frac{x+y}{2},\xi\right)
u(y)
dyd\xi
$$
for $u \in \mathscr{S}(\mathbb{R})$, 
where $\mathscr{S}(\mathbb{R})$ denotes the Schwartz class on $\mathbb{R}$. 
Set $D_x=-id/dx$ for short. 
\par
Here we give the concrete forms of operators $P$, $P^\ast$ and $H$ on $\mathbb{R}$. 
\begin{proposition}
\label{theorem:pph200}
We have 
$$
P
=
-
\frac{1}{\bar{B}}
(hD_x+\bar{C}x), 
\quad
P^\ast
=
-
\frac{1}{B}
(hD_x+Cx), 
$$
\begin{align*}
  H
& =
  \frac{1}{\lvert{B}\rvert^2}
  \left\{
  h^2D_x^2
  +
  \lvert{C}\rvert^2x^2
  +
  (C+\bar{C})
  \left(
  xhD_x+\frac{h}{2i}
  \right)
  \right\}
\\
& =
  \frac{1}{\lvert{B}\rvert^2}
  \operatorname{Op}_h^\text{W}
  \Bigl(
  \xi^2
  +
  \lvert{C}\rvert^2x^2
  +
  (C+\bar{C})x\xi
  \Bigr).
\end{align*}
\end{proposition}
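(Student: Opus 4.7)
The plan is to compute the action of $\Lambda$ and $\Lambda^*$ on $T_h u$ and re-express the results as $T_h$ applied to explicit differential operators on $\mathbb{R}$. Since $T_h^*T_h = I$ on $L^2(\mathbb{R})$, any identity of the form $\Lambda T_h u = T_h(Qu)$ immediately yields $P = Q$ (and similarly for $P^*$).

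First I would compute $P^*$. Differentiating under the integral sign, one has $z$ factored outside, while integration by parts in $x$ applied to the obvious identity $\int \partial_x[e^{i\phi/h}u]\,dx = 0$ produces a relation between $z T_h u$, $T_h(xu)$, and $T_h(hD_x u)$. Using $\phi_x(z,x) = Bz + Cx$ this collapses to
\begin{equation*}
z\,T_h u(z) \;=\; T_h\!\left(-\tfrac{1}{B}(hD_x + Cx)u\right)(z),
\end{equation*}
which gives $P^* = -\frac{1}{B}(hD_x + Cx)$.

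Next I would compute $P$. Since $\partial_z e^{i\phi/h} = \frac{i}{h}(Az + Bx)e^{i\phi/h}$, differentiation under the integral yields
\begin{equation*}
\Lambda T_h u(z) \;=\; \tfrac{2i\operatorname{Im} C}{|B|^2}\bigl(Az\,T_h u(z) + B\,T_h(xu)(z)\bigr).
\end{equation*}
The point is that the normalization $A = -iB^2/(2\operatorname{Im}C)$ forces $\tfrac{2iA\operatorname{Im}C}{|B|^2} = B/\bar B$, turning the first term into $(B/\bar B)\,z\,T_h u$. Substituting the $P^*$ identity to eliminate $z\,T_h u$ and collecting the coefficients of $T_h(xu)$ using $2i\operatorname{Im}C = C - \bar C$ will produce
\begin{equation*}
\Lambda T_h u \;=\; T_h\!\left(-\tfrac{1}{\bar B}(hD_x + \bar C x)u\right),
\end{equation*}
so $P = -\frac{1}{\bar B}(hD_x + \bar C x)$. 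The main subtlety is simply to keep track of conjugates here; nothing deeper is involved.

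Finally, to obtain $H$, I would expand $P^*P$ directly: multiplying out $\frac{1}{|B|^2}(hD_x + Cx)(hD_x + \bar C x)$ and using the canonical commutator $[hD_x,x] = -ih$ collects cross terms into $(C+\bar C)\,xhD_x$ plus a constant term $-ih\bar C/|B|^2$. Adding the shift $h\operatorname{Im}C/|B|^2 = h(C-\bar C)/(2i|B|^2)$ recombines those constants into $(C+\bar C)\tfrac{h}{2i}$, yielding the first displayed form of $H$. For the Weyl form, I would just note that $\operatorname{Op}_h^{\mathrm{W}}(x\xi) = \tfrac{1}{2}(xhD_x + hD_x x) = xhD_x + \tfrac{h}{2i}$, so the symbol $\xi^2 + |C|^2 x^2 + (C+\bar C)x\xi$ quantizes to precisely the bracketed expression. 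No step is a serious obstacle; the only care required is in the choice of the constant $A$, which is what collapses the cross-term from $\partial_z T_h$ into something that can be eliminated using the $P^*$ identity.
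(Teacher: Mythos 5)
Your proposal is correct and follows essentially the same route as the paper: differentiate $e^{i\phi(z,x)/h}$ under the integral, use $\phi_x = Bz + Cx$ with integration by parts to trade $z$ for $-\tfrac{1}{B}(hD_x+Cx)$, exploit the normalization $A=-iB^2/(2\operatorname{Im}C)$, and then expand $P^*\circ P$ with the commutator $[hD_x,x]=-ih$. The only difference is organizational (you establish the $P^*$ identity first and substitute it into the $\partial_z$ computation, whereas the paper integrates by parts directly in the computation of $P$), which does not change the substance.
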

\begin{proof}
We first compute $P$ and $P^\ast$. 
Since $\Lambda \circ T_h = T_h \circ P$, we deduce that 
for any $u \in \mathscr{S}(\mathbb{R})$, 
\begin{align*}
   \Lambda \circ T_h u(z)
& =
  \frac{2h\operatorname{Im}C}{\lvert{B}\rvert^2}
  \frac{d}{dz}
  C_\phi 
  h^{-3/4}
  \int_\mathbb{R}
  e^{i\phi(z,x)/h}
  u(x)
  dx
\\
& =
  C_\phi 
  h^{-3/4}
  \int_\mathbb{R}
  e^{i\phi(z,x)/h}
  \left\{
  \frac{2h\operatorname{Im}C}{\lvert{B}\rvert^2}
  \frac{i}{h}
  \frac{\partial \phi}{\partial z}(z,x)
  \right\}
  u(x)
  dx
\\
& =
  C_\phi 
  h^{-3/4}
  \int_\mathbb{R}
  e^{i\phi(z,x)/h}
  \frac{2i\operatorname{Im}C}{\lvert{B}\rvert^2}
  \Bigl(
  Az+Bx
  \Bigr)
  u(x)
  dx
\\
& =
  C_\phi 
  h^{-3/4}
  \int_\mathbb{R}
  e^{i\phi(z,x)/h}
  \frac{2i\operatorname{Im}C}{\lvert{B}\rvert^2}
  \left(
  -\frac{iB^2}{2 \operatorname{Im}C}z+Bx
  \right)
  u(x)
  dx
\\
& =
  C_\phi 
  h^{-3/4}
  \int_\mathbb{R}
  e^{i\phi(z,x)/h}
  \frac{1}{\bar{B}}
  \Bigl(
  Bz+2i\operatorname{Im}Cx
  \Bigr)
  u(x)
  dx
\\
& =
  C_\phi 
  h^{-3/4}
  \int_\mathbb{R}
  \Bigl(
  (hD_x-Cx)
  e^{i\phi(z,x)/h}
  \Bigr)
  \frac{1}{\bar{B}}
  u(x)
  dx
\\
& +
  C_\phi 
  h^{-3/4}
  \int_\mathbb{R}
  e^{i\phi(z,x)/h}
  \frac{2i\operatorname{Im}Cx}{\bar{B}}
  u(x)
  dx
\\
& =
  C_\phi 
  h^{-3/4}
  \int_\mathbb{R}
  e^{i\phi(z,x)/h}
  \frac{-1}{\bar{B}}
  (hD_x+Cx)
  u(x)
  dx
\\
& +
  C_\phi 
  h^{-3/4}
  \int_\mathbb{R}
  e^{i\phi(z,x)/h}
  \frac{2i\operatorname{Im}Cx}{\bar{B}}
  u(x)
  dx
\\
& =
  C_\phi 
  h^{-3/4}
  \int_\mathbb{R}
  e^{i\phi(z,x)/h}
  \frac{-1}{\bar{B}}
  (hD_x+Cx-2i\operatorname{Im}Cx)
  u(x)
  dx
\\
& =
  C_\phi 
  h^{-3/4}
  \int_\mathbb{R}
  e^{i\phi(z,x)/h}
  \frac{-1}{\bar{B}}
  (hD_x+\bar{C}x)
  u(x)
  dx, 
\end{align*}
which shows that 
$P=-(hD_x+\bar{C}x)/\bar{B}$. 
In the same way, we can obtain 
$P^\ast=-(hD_x+Cx)/B$, 
which is certainly the adjoint of $P$ on $L^2(\mathbb{R})$. 
Next we compute $H$. Simple computation gives 
\begin{align*}
  H
& =
  P^\ast \circ P
  +
  \frac{h\operatorname{Im}C}{\lvert{B}\rvert^2}
  =
  \frac{1}{\lvert{B}\rvert^2}
  (hD_x+Cx)(hD_x+\bar{C}x)
  +
  \frac{h\operatorname{Im}C}{\lvert{B}\rvert^2}
\\
& =
  \frac{1}{\lvert{B}\rvert^2}
  \left\{
  h^2D_x^2
  +
  \lvert{C}\rvert^2x^2
  +
  (C+\bar{C})xhD_x
  +
  \frac{h\bar{C}}{i}
  +
  h\operatorname{Im}C
  \right\}
\\
& =
  \frac{1}{\lvert{B}\rvert^2}
  \left\{
  h^2D_x^2
  +
  \lvert{C}\rvert^2x^2
  +
  (C+\bar{C})xhD_x
  +
  \frac{h\operatorname{Re}C}{i}
  \right\}
\\
& =
  \frac{1}{\lvert{B}\rvert^2}
  \left\{
  h^2D_x^2
  +
  \lvert{C}\rvert^2x^2
  +
  (C+\bar{C})
  \left(
  xhD_x
  +
  \frac{h}{2i}
  \right)
  \right\}, 
\end{align*}
which completes the proof.
\end{proof}
By using the pull-back of 
\eqref{equation:rodrigues200} 
and 
\eqref{equation:harmonic200}, 
we have for $n=0,1,2,\dotsc$ 
\begin{align*}
  \phi_n(x)
& =
  \frac{1}{\sqrt{n!}}
  \left(\frac{B}{\sqrt{2h\operatorname{Im}C}}\right)^n
  \bigl(P^\ast\bigr)^n
  \phi_0(x)
\\
& =
  \frac{1}{\sqrt{n!}}
  \left(\frac{-1}{\sqrt{2h\operatorname{Im}C}}\right)^n
  \bigl(hD_x+Cx\bigr)^n
  \phi_0(x)
\\
& =
  \frac{1}{\sqrt{n!}}
  \left(\frac{-1}{\sqrt{2h\operatorname{Im}C}}\right)^n
  e^{-iCx^2/2h}
  (hD_x)^n
  \bigl(e^{iCx^2/2h}\phi_0(x)\bigr),
\end{align*}
$$
H\phi_n
=
\frac{h\operatorname{Im}{C}}{\lvert{B}\rvert^2}
(2n+1)\phi_n.
$$
If we compute the concrete form of $\phi_0$, 
then we obtain the Rodrigues formula for $\{\phi_n\}_{n=0}^\infty$. 
\begin{theorem}
\label{theorem:rodrigues202}
We have for $n=0,1,2,\dotsc$ 
\begin{align*}
  \phi_0(x)
& =
  \left(\frac{\operatorname{Im}C}{\pi h}\right)^{1/4}
  e^{-i\bar{C}x^2/2h},
\\
  \phi_n(x)
& =
  \left(\frac{\operatorname{Im}C}{\pi h}\right)^{1/4}
  \frac{1}{\sqrt{n!}}
  \left(\frac{-1}{\sqrt{2h\operatorname{Im}C}}\right)^n
  e^{-i\operatorname{Re}C x^2/2h}
  e^{\operatorname{Im}C x^2/2h}
  (hD_x)^n
  e^{-\operatorname{Im}C x^2/h}. 
\end{align*}
\end{theorem}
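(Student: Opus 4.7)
The plan is to reduce everything to computing $\phi_0$, since the general formula for $\phi_n$ in terms of $\phi_0$ is already displayed immediately before the theorem. Note also that $\|\phi_0\|_{L^2}=\|\varphi_0\|_{\mathscr{H}_\Phi}=1$ because $T_h$ is a Hilbert-space isomorphism.

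First, I would derive a first-order ODE for $\phi_0$. Since $\varphi_0$ is a nonzero constant, $\Lambda \varphi_0 = 0$ in $\mathscr{H}_\Phi(\mathbb{C})$. Pulling this relation back by $T_h^\ast$ gives $P\phi_0 = 0$, and by Proposition~\ref{theorem:pph200} this is exactly
$$
(hD_x + \bar{C}x)\phi_0(x) = 0,
$$
i.e. $-ih\phi_0'(x) + \bar{C}x\phi_0(x)=0$. The unique solution up to a multiplicative constant is $\phi_0(x) = K e^{-i\bar{C}x^2/2h}$.

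Next, I would fix $K$ by the normalization $\|\phi_0\|_{L^2}=1$. Writing $C=\operatorname{Re}C + i\operatorname{Im}C$, we get $|e^{-i\bar{C}x^2/2h}|^2 = e^{2\operatorname{Re}(-i\bar{C}/2h)x^2} = e^{-\operatorname{Im}C\, x^2/h}$, hence by the standard Gaussian integral
$$
\|\phi_0\|_{L^2}^2 = |K|^2 \sqrt{\pi h/\operatorname{Im}C} = 1,
$$
so $|K| = (\operatorname{Im}C/\pi h)^{1/4}$; we take $K$ real and positive, which is the natural convention. The only subtle point is this phase choice: the ODE determines $\phi_0$ only up to a unit complex scalar, and in principle one could also verify the phase by computing $T_h^\ast\varphi_0$ directly as a Gaussian integral over $\mathbb{C}$, but that calculation is unnecessarily laborious. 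This is the closest the proof comes to a real obstacle.

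Finally, I would substitute the formula for $\phi_0$ into the identity
$$
\phi_n(x) = \frac{1}{\sqrt{n!}}\left(\frac{-1}{\sqrt{2h\operatorname{Im}C}}\right)^n e^{-iCx^2/2h}(hD_x)^n\bigl(e^{iCx^2/2h}\phi_0(x)\bigr)
$$
from the display preceding the statement. The algebraic simplifications are routine: $e^{iCx^2/2h}\phi_0(x) = K e^{i(C-\bar{C})x^2/2h} = K e^{-\operatorname{Im}C\, x^2/h}$ since $C-\bar{C}=2i\operatorname{Im}C$, and $e^{-iCx^2/2h} = e^{-i\operatorname{Re}C\, x^2/2h} e^{\operatorname{Im}C\, x^2/2h}$. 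Combining these two rewrites with the constant $K=(\operatorname{Im}C/\pi h)^{1/4}$ gives exactly the stated Rodrigues formula.
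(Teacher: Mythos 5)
Your route is genuinely different from the paper's: the paper computes $\phi_0=T_h^\ast\varphi_0$ head-on as a Gaussian integral over $\mathbb{C}$ (change of variable $\zeta=\overline{Bz}$, completion of the square, two real Gaussian integrals), whereas you characterize $\phi_0$ as the normalized solution of the first-order ODE $P\phi_0=0$ obtained by pulling back $\Lambda\varphi_0=0$ through Proposition~\ref{theorem:pph200}. The ODE step is correct ($P\phi_0=T_h^\ast\Lambda T_hT_h^\ast\varphi_0=T_h^\ast\Lambda\varphi_0=0$ since $T_hT_h^\ast$ is the identity on $\mathscr{H}_\Phi(\mathbb{C})$), the modulus computation $\lvert K\rvert=(\operatorname{Im}C/\pi h)^{1/4}$ is correct, and the reduction of the $\phi_n$ formula to the $\phi_0$ formula via the display preceding the theorem is exactly what the paper does implicitly.

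However, there is a genuine gap at the point you yourself flag: the phase of $K$. The theorem does not assert that \emph{some} normalized ground state of $H$ has the stated form; it asserts a formula for the specific function $T_h^\ast\varphi_0$, and that function has a well-defined phase which must be computed, not chosen. ``We take $K$ real and positive, which is the natural convention'' is an assumption, not an argument, and with it the claimed identity $\phi_0(x)=(\operatorname{Im}C/\pi h)^{1/4}e^{-i\bar{C}x^2/2h}$ is unproved. The fix does not require the full integral the paper carries out: since your ODE argument already gives $\phi_0(x)=\phi_0(0)\,e^{-i\bar{C}x^2/2h}$, it suffices to show $\phi_0(0)>0$. Evaluating the defining integral at $x=0$, one has $\phi_0(0)=C_\phi h^{-3/4}\lVert\varphi_0\rVert_\infty\int_\mathbb{C}e^{(\overline{Bz})^2/4h\operatorname{Im}C}e^{-\lvert Bz\rvert^2/2h\operatorname{Im}C}L(dz)$; after the substitution $\zeta=\xi+i\eta=\overline{Bz}$ the exponent becomes $-\{(\xi-i\eta)^2+4\eta^2\}/4h\operatorname{Im}C$, and a contour shift in $\xi$ shows the integral is a positive real number. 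With that one extra (short) computation your argument is complete and arguably cleaner than the paper's.
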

\begin{proof}
Recall the definition of $\phi_0$. 
We have 
\begin{align*}
  \phi_0(x)
& =
  T_h^\ast\varphi_0(x)
  =
  C_\phi h^{-3/4}
  \int_\mathbb{C}
  e^{-i\overline{\phi(z,x)}/h}
  \varphi_0(z)
  e^{-2\Phi(z)/h}
  L(dz)
\\
& =
  2^{-1}(\pi h)^{-5/4}(\operatorname{Im}C)^{-3/4}\lvert{B}\rvert^2
  \int_\mathbb{C}
  e^{F_1(z,x)/h}
  L(dz), 
\end{align*}
where 
$$
F_1(z,x)
=
-i
\left\{
\frac{i(\overline{Bz})^2}{4\operatorname{Im}C}
+
\overline{Bz}
+
\frac{\bar{C}x^2}{2}
\right\}
-
\frac{\lvert{Bz}\rvert^2}{2\operatorname{Im}C}.
$$
Change the variable $\zeta=\xi+i\eta:=\overline{Bz}$, $(\xi,\eta)\in\mathbb{R}^2$. 
We deduce 
$$
\phi_0(x)
=
2^{-1}(\pi h)^{-5/4}(\operatorname{Im}C)^{-3/4}
\int_\mathbb{C}
e^{F_2(\zeta,x)/h}
L(d\zeta), 
$$
\begin{align*}
  F_2(\zeta,x)
& =
  \frac{\zeta^2}{4\operatorname{Im}C}
  -
  i\zeta x
  -
  \frac{i\bar{C}x^2}{2}
  -
  \frac{\lvert{\zeta}\rvert^2}{2\operatorname{Im}C}
\\
& =
  -
  \frac{\bigl\{\xi-i(\eta-2\operatorname{Im}Cx)\bigr\}^2}{4\operatorname{Im}C}
  -
  \frac{(\eta-\operatorname{Im}Cx)^2}{\operatorname{Im}C}
  -
  \frac{i\bar{C}x^2}{2}. 
\end{align*}
Then we can obtain 
\begin{align*}
  \phi_0(x)
& =
  2^{-1}(\pi h)^{-5/4}(\operatorname{Im}C)^{-3/4}
  e^{-i\bar{C}x^2/2h}
  \int_\mathbb{R}
  e^{-\xi^2/4h\operatorname{Im}C}
  d\xi
  \int_\mathbb{R}
  e^{-\eta^2/h\operatorname{Im}C}
  d\eta
\\
& =
  \left(
  \frac{\operatorname{Im}C}{\pi h}
  \right)^{1/4}
  e^{-i\bar{C}x^2/2h}.
\end{align*}
This completes the proof.
\end{proof}
\section{The commutative case of non-commutative harmonic oscillators}
\label{section:noncommutative}
Consider a $2\times2$ system of second-order differential operators of the form 
$$
Q_{(\alpha,\beta)}
=
\frac{1}{2}
A_{(\alpha,\beta)}
\operatorname{Op}_h^\text{W}
(\xi^2+x^2)
+
J
\operatorname{Op}_h^\text{W}
(ix\xi),
$$ 
where 
$\alpha$ and $\beta$ are positive constants satisfying $\alpha\beta>1$, 
and 
$$
A_{(\alpha,\beta)}
=
\begin{bmatrix}
\alpha & 0
\\
0 & \beta 
\end{bmatrix}, 
\quad
J
=
\begin{bmatrix}
0 & -1
\\
1 & 0 
\end{bmatrix}. 
$$
A matrix $A_{(\alpha,\beta)}(\xi^2+x^2)/2+J(ix\xi)$, 
which is the symbol of the operator $Q_{(\alpha,\beta)}$, 
is a Hermitian matrix, and all its eigenvalues are real-valued. 
Note that all its eigenvalues are positive for $(x,\xi)\ne(0,0)$ 
if and only if $\alpha\beta>1$. 
In other words, $Q_{(\alpha,\beta)}$ is a system of 
semiclassical elliptic differential operators 
if and only if $\alpha\beta>1$. 
The system of differential operators $Q_{(\alpha,\beta)}$ 
was mathematically introduced in \cite{pw1} by Parmeggiani and Wakayama. 
They call $Q_{(\alpha,\beta)}$ a Hamiltonian of non-commutative harmonic oscillator. 
The word ``non-commutative'' comes from the non-commutativity 
$A_{(\alpha,\beta)}J \ne JA_{(\alpha,\beta)}$ for $\alpha\ne\beta$. 
It is not known that the system of differential equations for $Q_{(\alpha,\beta)}$ 
describes a physical phenomenon. 
\par
Parmeggiani and Wakayama intensively studied 
spectral properties of $Q_{(\alpha,\beta)}$ in 
\cite{pw1}, \cite{pw2} and \cite{pw3}. 
See also a monograph \cite{parmeggiani}. 
They proved that if $\alpha\beta>1$, 
then $Q_{(\alpha,\beta)}$ is a self-adjoint and positive operator, 
and its spectra consists of positive eigenvalues whose multiplicities are at most three. 
In case of $\alpha=\beta$, they obtained more detail. 
\par
The purpose of the present section is to give alternative proof 
of the results of Parmeggiani and Wakayama for the commutative case $\alpha=\beta$. 
More precisely, we study $Q_{(\alpha,\alpha)}$ 
by using the results in the previous section. 
\par
In what follows we assume that $\alpha=\beta$. 
Then $\alpha>1$ since $\alpha>0$ and $\alpha^2=\alpha\beta>1$. 
Let $I$ be the $2\times2$ identity matrix. 
Set $Q_\alpha=Q_{(\alpha,\alpha)}$ for short. 
Let $U$ be a $2\times2$ unitary matrix defined by 
$$
U
=
\frac{1}{\sqrt{2}}
\begin{bmatrix}
1 & -i
\\
1 & i 
\end{bmatrix}
$$ 
which diagonalize $J$ as 
$$
UJU^\ast
=
\begin{bmatrix}
-i & 0
\\
0 & i 
\end{bmatrix}.
$$
Then, we have
$$
U Q_\alpha U^\ast
=
\frac{\alpha}{2}
I
\operatorname{Op}_h^\text{W}
(\xi^2+x^2)
+
iUJU^\ast
\operatorname{Op}_h^\text{W}(x\xi) 
=
\begin{bmatrix}
H_{\alpha,+} & 0
\\
0 & H_{\alpha,-}
\end{bmatrix},
$$
$$
H_{\alpha,\pm}
=
\operatorname{Op}_h^\text{W}
(\xi^2+x^2)
\pm
\operatorname{Op}_h^\text{W}(x\xi)
=
\operatorname{Op}_h^\text{W}
\left(
\left\lvert
\sqrt{\frac{\alpha}{2}}
(\nu_{\alpha,\pm}\xi+x)
\right\rvert^2
\right),
$$
$$
\nu_{\alpha,\pm}
=
\frac{\pm1+i\sqrt{\alpha^2-1}}{\alpha}. 
$$
Note that $\lvert\nu_{\alpha,\pm}\rvert=1$ and $\operatorname{Im}\nu_{\alpha,\pm}>0$. 
\par
Here we make use of the results in the previous section by setting 
$$
B
=
\sqrt{\frac{2}{\alpha}}
\nu_{\alpha,\pm},
\quad
C=\nu_{\alpha,\pm},
\quad
A
=
\frac{B^2}{2i\operatorname{Im}C}
=
\frac{\nu_{\alpha,\pm}^2}{i\sqrt{\alpha^2-1}}. 
$$
Note that the requirement $\operatorname{Im}C>0$ is satisfied. 
Set  
\begin{align*}
  \phi_{\alpha,\pm,n}(x)
& =
  e^{\mp ix^2/2\alpha h}
  h_{\alpha,n}(x),
\\
  h_{\alpha,n}(x)
& =
  \left(\frac{\sqrt{\alpha^2-1}}{\pi h \alpha}\right)^{1/4}
  \frac{1}{\sqrt{n!}} 
  \left(-\sqrt{\frac{\alpha}{2\sqrt{\alpha^2-1}h}}\right)^n
\\
& \times
  e^{\sqrt{\alpha^2-1} x^2/2\alpha h}
  (hD_x)^n
  e^{-\sqrt{\alpha^2-1} x^2/\alpha h}
\end{align*}
for $n=0,1,2,\dotsc$. Then we deduce that 
$\{\phi_{\alpha,\pm,n}\}_{n=0}^\infty$ 
is a complete orthonormal system of $L^2(\mathbb{R})$, 
and 
$$
H_{\alpha,\pm}
\phi_{\alpha,\pm,n}
=
\frac{\sqrt{\alpha^2-1}}{2}h(2n+1)\phi_{\alpha,\pm,n},
\quad
n=0,1,2,\dotsc. 
$$
In order to get the eigenfunctions of $Q_\alpha$, we set 
$$
\Phi_{\alpha,+,n}(x)
=
U^\ast
\begin{bmatrix}
\phi_{\alpha,+,n} \\ 0
\end{bmatrix}, 
\quad
\Phi_{\alpha,-,n}(x)
=
U^\ast
\begin{bmatrix}
0 \\ \phi_{\alpha,-,n}
\end{bmatrix}, 
\quad
n=0,1,2,\dotsc, 
$$
those are,  
$$
\Phi_{\alpha,\pm,n}(x)
=
h_{\alpha,n}(x)
\cdot
\frac{e^{\mp ix^2/2\alpha h}}{\sqrt{2}}
\begin{bmatrix}
1 
\\
\pm i 
\end{bmatrix},
\quad
n=0,1,2,\dotsc.
$$
We have proved the results of this section as follows. 
\begin{theorem}
\label{theorem:pw} 
A system of $\mathbb{C}^2$-valued functions 
$\bigl\{\Phi_{\alpha,\mu,n} \ \big\vert \ \mu=\pm, n=0,1,2,\dotsc\bigr\}$ 
is a complete orthonormal system of $L^2(\mathbb{R}) \oplus L^2(\mathbb{R})$, 
and satisfies 
$$
Q_\alpha\Phi_{\alpha,\pm,n}
=
\frac{\sqrt{\alpha^2-1}}{2} h(2n+1)
\Phi_{\alpha,\pm,n},
\quad
n=0,1,2,\dotsc.
$$
\end{theorem}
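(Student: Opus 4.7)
The plan is to transport the spectral picture from the diagonalized side $\operatorname{diag}(H_{\alpha,+},H_{\alpha,-})$ back to $Q_\alpha$ by the unitary change of frame $U$. Since $U$ is a constant unitary $2\times 2$ matrix, left-multiplication by $U^\ast$ induces a unitary operator on $L^2(\mathbb{R})\oplus L^2(\mathbb{R})$, and the identity $U Q_\alpha U^\ast=\operatorname{diag}(H_{\alpha,+},H_{\alpha,-})$ established in the preceding diagonalization reduces the theorem to the scalar spectral results already obtained for $H_{\alpha,\pm}$ in Section~\ref{section:modified}.

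First, I would verify orthonormality and completeness. By the choice of $(A,B,C)$ made just before the statement, Theorem~\ref{theorem:cons22} and Theorem~\ref{theorem:rodrigues202} applied separately with $C=\nu_{\alpha,+}$ and $C=\nu_{\alpha,-}$ yield that each of $\{\phi_{\alpha,+,n}\}_{n\ge 0}$ and $\{\phi_{\alpha,-,n}\}_{n\ge 0}$ is a complete orthonormal system of $L^2(\mathbb{R})$. Consequently the stacked family
\[
\Bigl\{\bigl(\phi_{\alpha,+,n},0\bigr)^{T}\Bigr\}_{n\ge 0}\;\cup\;\Bigl\{\bigl(0,\phi_{\alpha,-,n}\bigr)^{T}\Bigr\}_{n\ge 0}
\]
is a complete orthonormal system of $L^2(\mathbb{R})\oplus L^2(\mathbb{R})$: orthogonality across the two blocks is automatic, orthonormality inside each block is the previous sentence, and completeness follows because every $(f,g)$ decomposes componentwise. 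Applying the pointwise unitary $U^\ast$ preserves both properties, and the result is precisely $\{\Phi_{\alpha,\mu,n}\mid \mu=\pm,\,n\ge 0\}$.

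Second, I would verify the eigenvalue equation by direct computation using the diagonalization. For the plus case,
\begin{align*}
Q_\alpha\Phi_{\alpha,+,n}
&= U^\ast\,\operatorname{diag}(H_{\alpha,+},H_{\alpha,-})\,U\,U^\ast\begin{bmatrix}\phi_{\alpha,+,n}\\ 0\end{bmatrix}
 = U^\ast\begin{bmatrix}H_{\alpha,+}\phi_{\alpha,+,n}\\ 0\end{bmatrix}\\
&= \frac{\sqrt{\alpha^2-1}}{2}\,h(2n+1)\,U^\ast\begin{bmatrix}\phi_{\alpha,+,n}\\ 0\end{bmatrix}
 = \frac{\sqrt{\alpha^2-1}}{2}\,h(2n+1)\,\Phi_{\alpha,+,n},
\end{align*}
where the scalar eigenvalue relation is the one recorded at the end of Section~\ref{section:modified}; the minus case is identical with the second component.

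There is really no substantive obstacle beyond bookkeeping, since all the analytic work is contained in Section~\ref{section:modified} and the diagonalization $UJU^\ast=\operatorname{diag}(-i,i)$. The only points that need a careful check are the matching of parameters: with $B=\sqrt{2/\alpha}\,\nu_{\alpha,\pm}$ and $C=\nu_{\alpha,\pm}$ one has $|B|^2=2/\alpha$, $|C|^2=1$ and $C+\bar C=\pm 2/\alpha$, so Proposition~\ref{theorem:pph200} produces (up to the appropriate normalization arising from the identification of $H_{\alpha,\pm}$ with $\operatorname{Op}_h^\text{W}(|\sqrt{\alpha/2}(\nu_{\alpha,\pm}\xi+x)|^2)$) the operator on the diagonal and the stated eigenvalue $\frac{h\operatorname{Im}C}{|B|^2}(2n+1)=\frac{\sqrt{\alpha^2-1}}{2}h(2n+1)$; and the Rodrigues formula of Theorem~\ref{theorem:rodrigues202}, with $\operatorname{Re}C=\pm 1/\alpha$ and $\operatorname{Im}C=\sqrt{\alpha^2-1}/\alpha$, reproduces exactly the explicit $\phi_{\alpha,\pm,n}=e^{\mp ix^2/2\alpha h}h_{\alpha,n}(x)$ written in the statement.
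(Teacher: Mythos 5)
Your proposal is correct and follows essentially the same route as the paper: diagonalize $Q_\alpha$ by the constant unitary $U$, identify each diagonal block $H_{\alpha,\pm}$ with the operator $H$ of Section~\ref{section:modified} for the parameters $B=\sqrt{2/\alpha}\,\nu_{\alpha,\pm}$, $C=\nu_{\alpha,\pm}$, and transport the complete orthonormal eigensystem back by $U^\ast$. Your parameter checks ($\lvert B\rvert^2=2/\alpha$, $\lvert C\rvert^2=1$, $C+\bar C=\pm 2/\alpha$, $\operatorname{Im}C=\sqrt{\alpha^2-1}/\alpha$) match the paper's computation exactly, so nothing is missing.
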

This is not a new result. 
This was first proved by Parmeggiani and Wakayama in \cite{pw1}. 
We believe that our method of proof is easier than that of \cite{pw1}.
\section{Orthogonal systems associated with ellipses in the phase plane}
\label{section:ellipeses}
Throughout of the present section, we assume that $h=1$ for the sake of simplicity. 
We begin with recalling the relationship between the standard Bargmann transform $B_1$ 
and circles in the phase plane. 
Here we introduce a Berezin-Toeplitz quantization on $\mathbb{C}$. 
Let $b(z)$ be an appropriate function on $\mathbb{C}$. Set 
$$
a(x,\xi)
=
\frac{1}{\pi}
\iint_{\mathbb{R}^2}
e^{-(x-y)^2-(\xi-\eta)^2}
b(y-i\eta)
dyd\eta. 
$$
It is known that 
$$
\left(\operatorname{Op}^\text{W}_1(a)u,v\right)_{L^2}
=
(bB_1u,B_1v)_{L^2_B}
=
\left(\tilde{T}_bB_1u,B_1v\right)_{\mathscr{H}_B}
$$
for $u,v \in \mathscr{S}(\mathbb{R})$. 
See, e.g., \cite{sjoestrand} and \cite{chihara1}. 
The operator $\tilde{T}_b$ is said to be the Berezin-Toeplitz quantization of $b$, 
which acts on $\mathscr{H}_B(\mathbb{C})$. 
If $b$ is a characteristic function on $\mathbb{C}$, 
then $\operatorname{Op}^\text{W}_1(a)$ 
is said to be a Daubechies' localization operator introduced in \cite{daubechies}. 
Moreover Daubechies proved that if $b(z)$ is radially symmetric, that is, 
$b$ is of the form $b(x-i\xi)=c(x^2+\xi^2)$ with some function $c(s)$ for $s\geqslant0$, 
then all the usual Hermite functions $\phi_{B,n}$ ($n=0,1,2,\dotsc$) 
are the eigenfunctions of $\tilde{T}_b$:
$$
\tilde{T}_b\phi_{B,n}=\lambda_n\phi_{B,n}, 
\quad
\lambda_n
=
\frac{1}{n!}
\int_0^\infty
c(2s)s^ne^{-s}
ds, 
\quad
n=0,1,2,\dotsc.
$$ 
\par
Recently, Daubechies' results have been developed. 
Here we quote two interesting results of inverse problems 
studied in \cite{ad} and \cite{yoshino}. 
On one hand, in \cite{yoshino} Yoshino proved that radially symmetric symbols 
of the Berezin-Toeplitz quantization on $\mathbb{C}$ can be reconstructed 
by all the eigenvalues $\{\lambda_n\}_{n=0}^\infty$. 
More precisely, by using the framework of hyperfunctions, 
he obtained the reconstruction formula for radially symmetric symbols. 
On the other hand, in \cite{ad} Abreu and D\"orfler studied 
the inverse problem for Daubechies' localization operators. 
Let $\Omega$ be a bounded subset of $\mathbb{C}$, 
and let $b(z)$ be the characteristic function of $\Omega$. 
They proved that if there exists a nonnegative integer $m$ such that 
the $m$-th Hermite function $\phi_{B,m}$ 
is an eigenfunction of $\operatorname{Op}^\text{W}_1(a)$, 
then $\Omega$ must be a disk centered at the origin. 
In this case it follows automatically that 
all the Hermite functions $\phi_{B,n}$ are eigenfunctions of 
$\operatorname{Op}^\text{W}_1(a)$ 
associated with eigenvalues 
$$
\lambda_n
=
e^{-R}
\sum_{k=n+1}^\infty
\frac{R^k}{k!},
\quad
n=0,1,2,\dotsc, 
$$
respectively, where $R$ is the radius of $\Omega$. 
In particular $R=-\log(1-\lambda_0)$. 
That is the review of the relationship between 
the usual Bargmann transform and circles (or disks) in $\mathbb{C}$. 
\par
The purpose of the present section is 
to consider the possibility of the extension of the above 
to ellipses (or elliptic disks) in $\mathbb{C}$. 
Unfortunately, however, we could not obtain the extension of the above. 
In what follows we introduce a family of functions 
which might be concerned with ellipses in $\mathbb{C}$. 
Here it is worth to mention the interesting work \cite{EM} of van Eijndhoven and Meyers. 
They introduced for $0<s<1$ function spaces $\chi_s(\mathbb{C})$, 
which is the set of all entire functions $\varphi(z)$ on $\mathbb{C}$ satisfying 
$$
\int_\mathbb{C}
\lvert\varphi(z)\rvert^2
\exp\left(-\frac{1-s^2}{2s}\lvert{z}\rvert^2+\frac{1+s^2}{4s}(z^2+\bar{z}^2)\right)
L(dz)
<
\infty.
$$
As the author pointed out in \cite{chihara2}, 
$\chi_s(\mathbb{C})$ is determined by the ellipse on $\mathbb{C}$ of the form 
$$
\{x-i\xi \in \mathbb{C}\ \vert \ (x,\xi)\in\mathbb{R}^2, sx^2+\xi^2=\rho^2\}, 
\quad
\rho>0,
$$
and $\chi_s(\mathbb{C})$ is a special case of $\mathscr{H}_\Phi(\mathbb{C})$ with 
$$
A
=
\frac{i}{s},
\quad
B
=
\pm 
i\sqrt{1-s^2},
\quad
C
=
t+is
\quad
(t\in\mathbb{R}). 
$$
Recently Ali, G\'orska, Horzela and Szafraniec in \cite{AGHS} 
studied some kinds of generating functions of Hermite polynomials 
in the abstract setting, 
and introduced some ortonormalized holomorphic Hermite functions 
in some function spaces including $\chi_s(\mathbb{C})$. 
Here we introduce a holomorphic Hermite functions 
$\{h_n\}_{n=0}^\infty$ on $\mathbb{C}$ and normalizing constants $b(n)$ 
defined by 
$$
h_n(z)
=
e^{z^2/2}
\left(-\frac{d}{dz}\right)^n
e^{-z^2},
\quad
b(n)
=
\frac{\pi\sqrt{s}}{1-s}
\left(2\frac{1+s}{1-s}\right)^n
n!. 
$$
One of the interesting results of \cite{AGHS} is that 
$\{h_n/\sqrt{b(n)}\}_{n=0}^\infty$ 
is a complete orthonormal system of $\chi_s(\mathbb{C})$. 
See \cite{chihara3} for more information on 
general holomorphic Hermite functions and their basic properties.  
\par
Let $\alpha>0$ and let $\beta\in\mathbb{R}$. 
Suppose that $(\alpha,\beta)\ne(1,0)$. 
For $\rho>0$, 
$$
E_\rho
:=
\{
x-i\xi\in\mathbb{C}
\ \vert \ 
(x,\xi)\in\mathbb{R}^2, 
\lvert\alpha{x}-i(\beta{x}+\xi)\rvert \leqslant \rho
\}
$$
is an elliptic disk in $\mathbb{C}$. 
Note that $E_\rho$ is a usual disk if and only if $(\alpha,\beta)=(1,0)$. 
Note that 
$$
\bigl\{
\partial{E_\rho}
\ \big\vert \ 
\alpha>0, \beta\in\mathbb{R}, \rho>0 
\bigr\} 
$$
is the set of all ellipses centered at the origin, 
where 
$\partial{E_\rho}=\{x-i\xi\in\mathbb{C}\ \vert \ \lvert\alpha{x}-i(\beta{x}+\xi)\rvert = \rho\}$. Indeed, consider a function
$$
F(x,y;a,b,\theta)
:=
a(x\cos\theta-\xi\sin\theta)^2
+
b(x\sin\theta+\xi\cos\theta)^2, 
\quad
a>0, b>0, \theta\in[0,2\pi]. 
$$
Elementary computation gives 
$$
\frac{F(x,y;a,b,\theta)}{a\sin^2\theta+b\cos^2\theta}
=
\frac{ab}{(a\sin^2\theta+b\cos^2\theta)^2}
x^2
+
\left\{
\frac{(b-a)\sin\theta\cos\theta}{a\sin^2\theta+b\cos^2\theta}x
+
\xi
\right\}^2.
$$
\par
Here we introduce a function $\psi_0(z)$ which seems 
to be related with an elliptic disk $E_\rho$. 
Set $z=x-i\xi$ and $\zeta=\alpha{x}-i(\beta{x}+\xi)$ 
for $(x,\xi)\in\mathbb{R}^2$, $\alpha>0$ and $\beta\in\mathbb{R}$. 
Then 
$$
\zeta
=
\frac{\alpha+1-i\beta}{2}
z
+
\frac{\alpha-1-i\beta}{2}
\bar{z},
\quad
z
=
\frac{\alpha+1+i\beta}{2\alpha}
\zeta
-
\frac{\alpha-1-i\beta}{2\alpha}
\bar{\zeta}.
$$
We define the function $\psi_0(z)$ by 
$$
\psi_0(z)
=
\exp\left(-\frac{a}{4}z^2\right),
\quad
a
=
\frac{\alpha^2+\beta^2-1+2i\beta}{\alpha^2+\beta^2+1}. 
$$
Let $\lVert\cdot\rVert_{\mathscr{H}_B}$ be the norm of $\mathscr{H}_B(\mathbb{C})$ 
determined by the inner product $(\cdot,\cdot)_{\mathscr{H}_B}$. 
The properties of $\psi_0(z)$ are the following. 
\begin{lemma}
\label{theorem:psizero}
We have 
\begin{itemize}
\item[{\rm (i)}] 
$\psi_0\in\mathscr{H}_B(\mathbb{C})$.  
\item[{\rm (ii)}] 
$\lvert\psi_0(z)\rvert^2e^{-\lvert{z}\rvert^2/2}=e^{-\lvert\zeta\rvert^2/(\alpha^2+\beta^2+1)}$. 
\item[{\rm (iii)}] 
$\lVert\psi_0\rVert_{\mathscr{H}_B}^2=(\alpha^2+\beta^2+1)\pi/\alpha$. 
\end{itemize}
\end{lemma}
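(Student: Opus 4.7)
The plan is to prove (ii) first by a direct algebraic computation, and then deduce (i) and (iii) as easy consequences.

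For (ii), I would start from $\lvert\psi_0(z)\rvert^2=\psi_0(z)\overline{\psi_0(z)}=\exp\bigl(-\operatorname{Re}(az^2)/2\bigr)$, so that
$$
\lvert\psi_0(z)\rvert^2 e^{-\lvert z\rvert^2/2}
=\exp\!\Bigl(-\tfrac{1}{2}\bigl(\operatorname{Re}(az^2)+\lvert z\rvert^2\bigr)\Bigr).
$$
Writing $a=a_1+ia_2$ with $a_1=(\alpha^2+\beta^2-1)/(\alpha^2+\beta^2+1)$ and $a_2=2\beta/(\alpha^2+\beta^2+1)$, and $z=x-i\xi$, so that $z^2=(x^2-\xi^2)-2ix\xi$, I would expand
$$
\operatorname{Re}(az^2)+\lvert z\rvert^2
=(a_1+1)x^2+(1-a_1)\xi^2+2a_2 x\xi.
$$
A direct simplification of the coefficients gives $a_1+1=2(\alpha^2+\beta^2)/(\alpha^2+\beta^2+1)$, $1-a_1=2/(\alpha^2+\beta^2+1)$, $2a_2=4\beta/(\alpha^2+\beta^2+1)$, so the whole expression equals
$$
\frac{2\bigl[(\alpha^2+\beta^2)x^2+2\beta x\xi+\xi^2\bigr]}{\alpha^2+\beta^2+1}
=\frac{2\lvert\zeta\rvert^2}{\alpha^2+\beta^2+1},
$$
using $\lvert\zeta\rvert^2=\alpha^2 x^2+(\beta x+\xi)^2$. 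This proves (ii).

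For (i), the function $\psi_0$ is entire as an exponential of a polynomial, so it suffices to check $\psi_0\in L^2_B(\mathbb{C})$. By (ii), the weighted integrand is a Gaussian whose exponent $-\lvert\zeta\rvert^2/(\alpha^2+\beta^2+1)$ is strictly negative away from the origin (the real-linear map $(x,\xi)\mapsto\zeta$ is a bijection because $\alpha>0$), which gives integrability at once.

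For (iii), I would change variables from $(x,\xi)$ to $(\operatorname{Re}\zeta,\operatorname{Im}\zeta)=(\alpha x,-\beta x-\xi)$; its Jacobian determinant has absolute value $\alpha$, so $L(dz)=\alpha^{-1}L(d\zeta)$. Then
$$
\lVert\psi_0\rVert_{\mathscr{H}_B}^2
=\frac{1}{\alpha}\int_{\mathbb{C}}e^{-\lvert\zeta\rvert^2/(\alpha^2+\beta^2+1)}L(d\zeta)
=\frac{(\alpha^2+\beta^2+1)\pi}{\alpha}
$$
by the standard two-dimensional Gaussian integral. The only real obstacle in the whole argument is the bookkeeping in (ii): one has to keep track of the real and imaginary parts of $a$ and of $z^2$ simultaneously, and verify that the resulting real quadratic form in $(x,\xi)$ matches $\lvert\zeta\rvert^2$ up to the expected constant. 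Once that identification is made, (i) and (iii) are immediate.
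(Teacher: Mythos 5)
Your proposal is correct and follows essentially the same route as the paper: the heart of both arguments is the real quadratic-form identity $\operatorname{Re}(az^2)+\lvert z\rvert^2=2\lvert\zeta\rvert^2/(\alpha^2+\beta^2+1)$ (stated in the paper as $\lvert\zeta\rvert^2/(\alpha^2+\beta^2+1)=\lvert z\rvert^2/2+az^2/4+\bar{a}\bar{z}^2/4$), followed by the change of variables with Jacobian $\alpha$ for (iii). The only organizational difference is that the paper proves (i) separately via the bound $\lvert\psi_0(z)\rvert^2e^{-\lvert z\rvert^2/2}\leqslant\exp\bigl(-\tfrac{1-\lvert a\rvert}{2}\lvert z\rvert^2\bigr)$ using $0<\lvert a\rvert<1$, whereas you deduce (i) directly from (ii) and the invertibility of $(x,\xi)\mapsto\zeta$, which is equally valid and slightly more economical.
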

\begin{proof} 
We first show (i). 
We have only to show the integrability of 
$\lvert\psi_0(z)\rvert^2e^{-\lvert{z}\rvert^2/2}$ 
since $\psi_0(z)$ is an entire function.  
Note that 
$$
\lvert{a}\rvert^2
=
\frac{(\alpha^2+\beta^2-1)^2+4\beta^2}{(\alpha^2+\beta^2+1)^2} 
=
1
-
\left(
\frac{2\alpha}{\alpha^2+\beta^2+1}
\right)^2.
$$
We have 
$$
0
<
\frac{2\alpha}{\alpha^2+\beta^2+1}
<1
$$
since 
$$
\alpha^2+\beta^2+1-2\alpha
=
(\alpha-1)^2+\beta^2
>0
$$
for $(\alpha,\beta)\ne(1,0)$. 
Thus $0<\lvert{a}\rvert<1$. 
We deduce that 
\begin{align*}
  \lvert\psi_0(z)\rvert^2
  e^{-\lvert{z}\rvert^2/2}
& \leqslant
  \exp
  \left(
  -\frac{a}{4}z^2-\frac{\bar{a}}{4}\bar{z}^2
  -\frac{1}{2}\lvert{z}\rvert^2
  \right)
  \leqslant
  \exp
  \left(
  -\frac{1-\lvert{a}\rvert}{2}\lvert{z}\rvert^2
  \right)
\\
& =
  \exp
  \left\{
  -\frac{1}{2}
  \left(
  1
  -
  \sqrt{1-\left(\frac{2\alpha}{\alpha^2+\beta^2+1}\right)^2}
  \right)
  \lvert{z}\rvert^2
  \right\}.  
\end{align*}
This implies that 
$\lvert\psi_0(z)\rvert^2e^{-\lvert{z}\rvert^2/2}$ 
is integrable on $\mathbb{C}$ with respect to the Lebesgue measure $L(dz)$ 
and $\psi_0\in\mathscr{H}_B(\mathbb{C})$. 
\par
We show (ii) and (iii). 
Elementary computation shows that 
$$
\frac{\lvert\zeta\rvert^2}{\alpha^2+\beta^2+1}
=
\frac{\lvert{z}\rvert^2}{2}
+
\frac{a}{4}z^2
+
\frac{\bar{a}}{4}\bar{z}^2, 
$$
which implies (ii). Moreover, it is easy to see that 
$dz \wedge d\bar{z}=\alpha^{-1}d\zeta \wedge d\bar{\zeta}$ 
and 
$$
\lVert\psi_0\rVert_{\mathscr{H}_B}^2
=
\int_\mathbb{C}
\lvert\psi_0(z)\rvert^2
e^{-\lvert{z}\rvert^2/2}
L(dz)
=
\frac{1}{\alpha}
\int_\mathbb{C}
e^{-\lvert\zeta\rvert^2/(\alpha^2+\beta^2+1)}
L(d\zeta)
=
\frac{\alpha^2+\beta^2+1}{\alpha}
\pi.
$$
This completes the proof.
\end{proof}
The identity 
$\lvert\psi_0(z)\rvert^2e^{-\lvert{z}\rvert^2/2}=e^{-\lvert\zeta\rvert^2/(\alpha^2+\beta^2+1)}$ 
makes us to expect that $\psi_0$ might be related with an elliptic disk $E_\rho$ 
and generate a family of eigenfunctions for the Daubechies' localization operators 
supported in $E_\rho$. 
Unfortunately, however, this expectation fails to hold. 
The purpose of the present section is to generate a family of functions by $\psi_0$, 
and show its properties similar to the previous sections. 
To state our results in the present section, 
we here introduce some notation. 
Set 
$$
\lambda
=
\frac{2\alpha^2}{(\alpha^2+\beta^2+1)(\alpha^2+\beta^2-1-2i\beta)}, 
\quad 
\Lambda_{\alpha,\beta}
=
\frac{1}{a}
\frac{\partial}{\partial z}
+
\frac{z}{2}, 
\quad
\Lambda_{\alpha,\beta}^\ast
=
\frac{\partial}{\partial z}
+
\frac{a+2\lambda}{2}z. 
$$
It is easy to see that $a+2\lambda=1/\bar{a}$, $\Lambda_{\alpha,\beta}\psi_0=0$,  
and $\Lambda_{\alpha,\beta}^\ast$ is the Hilbert adjoint of 
$\Lambda_{\alpha,\beta}$ on $\mathscr{H}_B(\mathbb{C})$. 
We make use of $\psi_0$, $\Lambda_{\alpha,\beta}$ and $\Lambda_{\alpha,\beta}^\ast$ 
as a generating element of a family of functions, 
and annihilation and creation operators respectively. 
Set $\psi_n=(\Lambda_{\alpha,\beta}^\ast)^n\psi_0$ for $n=0,1,2,\dotsc$, and set 
$$
C_{\alpha,\beta}
=
\frac{1-\bar{a}}{2\bar{a}}
=
\frac{1+i\beta}{\alpha^2+\beta^2-1-2i\beta} 
$$
for short. 
Properties of 
$\Lambda_{\alpha,\beta}$, 
$\Lambda_{\alpha,\beta}^\ast$ 
and 
$\{\psi_n\}_{n=0}^\infty$ are the following. 
\begin{theorem}
\label{theorem:psin}
\quad
\begin{itemize}
\item[{\rm (i)}] 
$\{\psi_n\}_{n=0}^\infty$ satisfies a formula of the form 
$$
\psi_n(z)
=
\left\{
e^{-\lambda z^2/2}
\left(\frac{\partial}{\partial z}\right)^n
e^{\lambda z^2/2}
\right\}
\psi_0(z),
\quad
n=0,1,2,\dotsc.
$$
\item[{\rm (ii)}] 
For $n=1,2,3,\dotsc$, 
$$
\Lambda_{\alpha,\beta}
(\Lambda_{\alpha,\beta}^\ast)^n
=
(\Lambda_{\alpha,\beta}^\ast)^n
\Lambda_{\alpha,\beta}
+
n
\frac{\lambda}{a}
(\Lambda_{\alpha,\beta}^\ast)^{n-1}.
$$
\item[{\rm (iii)}] 
$\{\psi_n\}_{n=0}^\infty$ is a complete orthogonal system of $\mathscr{H}_B(\mathbb{C})$.
\item[{\rm (iv)}] 
For $n=0,1,2,\dotsc$, 
$$
\left(
\frac{\Lambda_{\alpha,\beta}^\ast\Lambda_{\alpha,\beta}}{\lvert{C_{\alpha,\beta}}\rvert^2}
+
\frac{\alpha^2}{1+\beta^2}
\right)
\psi_n
=
\frac{\alpha^2}{1+\beta^2}
(2n+1)
\psi_n.
$$
\end{itemize}
\end{theorem}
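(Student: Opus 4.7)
The plan is to exploit an underlying Heisenberg-algebra structure: I would first verify by direct computation that $\Lambda_{\alpha,\beta}\psi_0=0$ and $[\Lambda_{\alpha,\beta},\Lambda_{\alpha,\beta}^\ast]=\lambda/a$, a \emph{scalar} that simplifies to $2\alpha^2/\{(\alpha^2+\beta^2-1)^2+4\beta^2\}$ and is therefore real and strictly positive. The identity making this work is $a+2\lambda = 1/\bar{a}$, equivalently $\lambda=(1-\lvert a\rvert^2)/(2\bar{a})$, which follows immediately from the explicit formulas for $a$ and $\lambda$. Once this structure is in place, three of the four assertions are standard Fock-space manipulations; only completeness is delicate.

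With the scalar commutator in hand, assertion (ii) follows by induction on $n$, exactly as for the classical relation $[a,(a^\dagger)^n]=n[a,a^\dagger](a^\dagger)^{n-1}$. For (i), I would observe that for any polynomial $P$ a short calculation using $\partial_z\psi_0=-(a/2)z\psi_0$ yields $\Lambda_{\alpha,\beta}^\ast(P\psi_0)=(\partial_z P+\lambda z P)\psi_0$; recognising $\partial_z+\lambda z = e^{-\lambda z^2/2}\partial_z e^{\lambda z^2/2}$ and iterating gives the Rodrigues-type formula. For the orthogonality half of (iii), applying (ii) to $\psi_0$ gives $\Lambda_{\alpha,\beta}\psi_n = n(\lambda/a)\psi_{n-1}$ and more generally $\Lambda_{\alpha,\beta}^m\psi_n = \{n!/(n-m)!\}(\lambda/a)^m\psi_{n-m}$ for $m\leqslant n$ and zero otherwise; the adjoint relation then gives $(\psi_m,\psi_n)_{\mathscr{H}_B}=(\psi_0,\Lambda_{\alpha,\beta}^m\psi_n)_{\mathscr{H}_B}$, which vanishes for $m\ne n$ and equals $n!(\lambda/a)^n\lVert\psi_0\rVert_{\mathscr{H}_B}^2$ for $m=n$ (positive thanks to $\lambda/a>0$). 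Assertion (iv) is likewise immediate: $\Lambda_{\alpha,\beta}^\ast\Lambda_{\alpha,\beta}\psi_n = n(\lambda/a)\psi_n$, and a direct computation of $\lvert C_{\alpha,\beta}\rvert^2=(1+\beta^2)/\{(\alpha^2+\beta^2-1)^2+4\beta^2\}$ yields $(\lambda/a)/\lvert C_{\alpha,\beta}\rvert^2 = 2\alpha^2/(1+\beta^2)$, whence the eigenvalue $(2n+1)\alpha^2/(1+\beta^2)$ drops out after adding the constant $\alpha^2/(1+\beta^2)$.

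The main obstacle is completeness in (iii). By (i), $\psi_n = P_n(z)\psi_0$ with $P_n$ of degree exactly $n$ and leading coefficient $\lambda^n\ne 0$, so the closed linear span of $\{\psi_n\}_{n=0}^\infty$ equals the $\mathscr{H}_B$-closure of $\psi_0\cdot\mathbb{C}[z]$. To show this equals $\mathscr{H}_B$, I would take $f=\sum_k c_k z^k\in\mathscr{H}_B$ orthogonal to every $z^n\psi_0$ and compute the vanishing moments $\int f(z)\bar{z}^n e^{-\bar{a}\bar{z}^2/4-\lvert z\rvert^2/2}L(dz)$ in polar coordinates. Expanding $e^{-\bar{a}\bar{z}^2/4}$ couples only even powers of $\bar{z}$, so the conditions decouple according to the parity of $k$ and, after setting $D_k = c_k\cdot 2^k k!$, take the form $H^{(i)}(-\bar{a}/4)=0$ for every $i\geqslant 0$, where $H(t)=\sum_j D_{2j}\,t^j/j!$ in the even case and analogously for the odd indices. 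The Bargmann bound $\sum_k\lvert c_k\rvert^2\cdot\pi\cdot 2^{k+1}k!\leqslant\lVert f\rVert_{\mathscr{H}_B}^2$ combined with $\sqrt{(2j)!}/j!\sim 2^j/(\pi j)^{1/4}$ shows that $H$ has radius of convergence at least $1/4$, while $\lvert\bar{a}/4\rvert<1/4$ since $\lvert a\rvert<1$ by Lemma~\ref{theorem:psizero}. Thus $-\bar{a}/4$ lies strictly inside the disk of convergence, all derivatives vanishing there force $H\equiv 0$, and every $c_k$ vanishes, so $f\equiv 0$ and the system is complete.
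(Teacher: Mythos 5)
Your proposal is correct, and for parts (i), (ii), (iv) and the orthogonality half of (iii) it follows essentially the same Fock-space mechanics as the paper: the scalar commutator $[\Lambda_{\alpha,\beta},\Lambda_{\alpha,\beta}^\ast]=\lambda/a$, the identity $a+2\lambda=1/\bar{a}$, the conjugation $\partial_z+cz=e^{-cz^2/2}\partial_z e^{cz^2/2}$, and the arithmetic $(\lambda/a)/\lvert C_{\alpha,\beta}\rvert^2=2\alpha^2/(1+\beta^2)$ all match the paper's computations. The genuine divergence is in the completeness half of (iii). The paper does \emph{not} prove completeness inside this theorem at all: it defers it to Theorem~\ref{theorem:PSIN}, where the pull-backs $\Psi_n=B_1^\ast\psi_n$ are computed explicitly on the real line (this requires evaluating the Gaussian integral $B_1^\ast\psi_0$ via Lemma~\ref{theorem:gauss}), identified up to constants with the generalized Hermite functions of Section~\ref{section:modified} for a specific choice of $A,B,C$, and completeness is then imported from Theorem~\ref{theorem:cons22} through the isomorphism $B_1$. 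You instead prove completeness directly in $\mathscr{H}_\mathrm{B}(\mathbb{C})$: reduce to density of $\psi_0\cdot\mathbb{C}[z]$, expand the moment conditions against the monomial basis, and observe that the resulting conditions say all derivatives of an auxiliary power series $H$ vanish at $-\bar{a}/4$, a point strictly inside its disk of convergence because $\lvert a\rvert<1$ (Lemma~\ref{theorem:psizero}) while the Bargmann norm bound forces radius at least $1/4$. This is a legitimate and self-contained alternative; it buys you a proof of Theorem~\ref{theorem:psin} that does not depend on the later real-line computations, at the cost of the term-by-term integration/rearrangement justification (standard, since $z^n\psi_0\in\mathscr{H}_\mathrm{B}$ and $\{z^m\}$ is an orthogonal basis, but worth a sentence). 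The paper's detour costs nothing in context because the explicit formula for $\Psi_0$ is wanted anyway for Theorem~\ref{theorem:PSIN}.
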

\begin{proof}
First we show (i). 
Note that for any $c\in\mathbb{C}$ and for any holomorphic function $f(z)$, 
we deduce that 
\begin{align*}
  \left(
  \frac{\partial}{\partial z} 
  +
  cz
  \right)
  f(z)
& =
  \left( 
  \frac{\partial}{\partial z}
  +
  cz
  \right)
  \bigl\{
  e^{-cz^2/2}
  \bigl(e^{cz^2/2}f(z)\bigr)
  \bigr\}
\\
& =
  -cze^{-cz^2/2}
  \bigl(e^{cz^2/2}f(z)\bigr)
  +
  e^{-cz^2/2}
  \frac{\partial}{\partial z}
  \bigl(e^{cz^2/2}f(z)\bigr)
  +
  cze^{-cz^2/2}
  \bigl(e^{cz^2/2}f(z)\bigr)
\\
& =
  e^{-cz^2/2}
  \frac{\partial}{\partial z}
  \bigl(e^{cz^2/2}f(z)\bigr).  
\end{align*}
Using this repeatedly, we have 
\begin{align*}
  \psi_n(z)
& =
  (\Lambda_{\alpha,\beta}^\ast)^n
  \psi_0(z)
  =
  \left\{
  \frac{\partial}{\partial z}
  +
  \left(\frac{a}{2}+\lambda\right)
  \right\}^n
  e^{-az^2/4}
\\
& =
  e^{-az^2/4-\lambda z^2/2}
  \left(\frac{\partial}{\partial z}\right)^n
  \bigl(e^{az^2/4+\lambda z^2/2} \cdot e^{-az^2/4}\bigr)
  =
  e^{-az^2/4-\lambda z^2/2}
  \left(\frac{\partial}{\partial z}\right)^n
  e^{\lambda z^2/2}
\\
& =
  \left\{
  e^{-\lambda z^2/2}
  \left(\frac{\partial}{\partial z}\right)^n
  e^{\lambda z^2/2}
  \right\}
  e^{-az^2/4}
  =
  \left\{
  e^{-\lambda z^2/2}
  \left(\frac{\partial}{\partial z}\right)^n
  e^{\lambda z^2/2}
  \right\}
  \psi_0(z), 
\end{align*}
which is desired. 
\par
Next we show (ii). We employ induction on $n$. 
For $n=1$, we deduce that 
\begin{align*}
  \Lambda_{\alpha,\beta}\Lambda_{\alpha,\beta}^\ast
  -
  \Lambda_{\alpha,\beta}^\ast\Lambda_{\alpha,\beta}
& =
  \frac{1}{a}
  \left(
  \frac{\partial}{\partial z}
  +
  \frac{a}{2}z
  \right)
  \left\{
  \frac{\partial}{\partial z}
  +
  \left(
  \frac{a}{2}+\lambda
  \right)
  z
  \right\}
\\
& -
  \left\{
  \frac{\partial}{\partial z}
  +
  \left(
  \frac{a}{2}+\lambda
  \right)
  z
  \right\}
  \frac{1}{a}
  \left(
  \frac{\partial}{\partial z}
  +
  \frac{a}{2}z
  \right)
\\
& =
  \frac{1}{a}
  \left(
  \frac{\partial}{\partial z}
  +
  \frac{a}{2}z
  \right)
  (\lambda z)
  -
  (\lambda z)
  \frac{1}{a}
  \left(
  \frac{\partial}{\partial z}
  +
  \frac{a}{2}z
  \right)
  (\lambda z)
\\
& =
  \frac{\lambda}{a}
  =
  1
  \cdot
  \frac{\lambda}{a}
  \cdot
  (\Lambda_{\alpha,\beta}^\ast)^0.
\end{align*}
Here we suppose that (ii) holds for some $n=1,2,3,\dotsc$. 
We show the case of $n+1$. 
By using the cases of $n$ and $1$, we deduce that 
\begin{align*}
  \Lambda_{\alpha,\beta}(\Lambda_{\alpha,\beta}^\ast)^{n+1}
& =
  \Lambda_{\alpha,\beta}(\Lambda_{\alpha,\beta}^\ast)^n\Lambda_{\alpha,\beta}^\ast
\\
& =
  \left\{
  (\Lambda_{\alpha,\beta}^\ast)^n
  \Lambda_{\alpha,\beta}
  +
  n
  \frac{\lambda}{a}
  (\Lambda_{\alpha,\beta}^\ast)^{n-1}
  \right\}
  \Lambda_{\alpha,\beta}^\ast
\\
& =
  (\Lambda_{\alpha,\beta}^\ast)^n
  (\Lambda_{\alpha,\beta}\Lambda_{\alpha,\beta}^\ast)
  +
  n
  \frac{\lambda}{a}
  (\Lambda_{\alpha,\beta}^\ast)^n
\\
& =
  (\Lambda_{\alpha,\beta}^\ast)^n
  \left\{
  \Lambda_{\alpha,\beta}^\ast\Lambda_{\alpha,\beta}
  +
  \frac{\lambda}{a}
  \right\}
  +
  n
  \frac{\lambda}{a}
  (\Lambda_{\alpha,\beta}^\ast)^n
\\
& =
  (\Lambda_{\alpha,\beta}^\ast)^{n+1}
  \Lambda_{\alpha,\beta}
  +
  (n+1)
  \frac{\lambda}{a}
  (\Lambda_{\alpha,\beta}^\ast)^n,  
\end{align*}
which is desired. 
\par
For (iii), we here show only the orthogonality 
\begin{align}
& (\psi_m,\psi_n)_{\mathscr{H}_B}
  =
  \left(\frac{\lambda}{a}\right)^n
  n!
  \lVert\psi_0\rVert_{\mathscr{H}_B}^2
  \delta_{mn}
\nonumber
\\
  =
& \frac{(\alpha^2+\beta^2+1)\pi}{\alpha}
  \left(
  \frac{2\alpha^2}{(\alpha^2+\beta^2-1)^2+4\beta^2}
  \right)^n
  n!
  \delta_{mn}, 
  \quad
  m,n=0,1,2,\dotsc.
\label{equation:orthogonality} 
\end{align}
The completeness will be automatically proved later. 
Recall that $\Lambda_{\alpha,\beta}\psi_0=0$. 
Suppose that $m \geqslant n$. 
By using (ii) repeatedly, we deduce that 
\begin{align*}
  (\psi_m,\psi_n)_{\mathscr{H}_B}
& =
  \bigl(
  (\Lambda_{\alpha,\beta}^\ast)^m\psi_0,
  (\Lambda_{\alpha,\beta}^\ast)^n\psi_0
  \bigr)_{\mathscr{H}_B} 
\\
& =
  \bigl(
  (\Lambda_{\alpha,\beta}^\ast)^{m-1}\psi_0,
   \Lambda_{\alpha,\beta}(\Lambda_{\alpha,\beta}^\ast)^n\psi_0
  \bigr)_{\mathscr{H}_B} 
\\
& =
  \left(
  (\Lambda_{\alpha,\beta}^\ast)^{m-1}\psi_0,
  (\Lambda_{\alpha,\beta}^\ast)^n\Lambda_{\alpha,\beta}\psi_0
  +
  n\frac{\lambda}{a}(\Lambda_{\alpha,\beta}^\ast)^{n-1}\psi_0
  \right)_{\mathscr{H}_B}
\\
& =
  n\frac{\lambda}{a}
  \bigl(
  (\Lambda_{\alpha,\beta}^\ast)^{m-1}\psi_0,
  (\Lambda_{\alpha,\beta}^\ast)^{n-1}\psi_0
  \bigr)_{\mathscr{H}_B}
\\
& =
  \dotsb
\\
& =
  n!\left(\frac{\lambda}{a}\right)^n
  \bigl(
  (\Lambda_{\alpha,\beta}^\ast)^{m-n}\psi_0,
   \psi_0
  \bigr)_{\mathscr{H}_B}. 
\end{align*}
If $m>n$, then 
$$
(\psi_m,\psi_n)_{\mathscr{H}_B}
=
n!\left(\frac{\lambda}{a}\right)^n
\bigl(
(\Lambda_{\alpha,\beta}^\ast)^{m-n-1}\psi_0,
\Lambda_{\alpha,\beta}\psi_0
\bigr)_{\mathscr{H}_B}
=
0.
$$
If $m=n$, then 
$$
(\psi_n,\psi_n)_{\mathscr{H}_B}
=
n!\left(\frac{\lambda}{a}\right)^n
(\psi_0,\psi_0)_{\mathscr{H}_B}
=
n!\left(\frac{\lambda}{a}\right)^n
\lVert\psi_0\rVert_{\mathscr{H}_B}^2. 
$$
\par
Finally we show (iv). 
By using (ii) and $\Lambda_{\alpha,\beta}\psi_0=0$ again, 
we deduce that 
\begin{align*}
  \left(
  \frac{\Lambda_{\alpha,\beta}^\ast\Lambda_{\alpha,\beta}}{\lvert{C_{\alpha,\beta}}\rvert^2}
  +
  \frac{\alpha^2}{1+\beta^2}
  \right)
  \psi_n 
& = 
  \frac{\Lambda_{\alpha,\beta}^\ast\Lambda_{\alpha,\beta}(\Lambda_{\alpha,\beta}^\ast)^n}{\lvert{C_{\alpha,\beta}}\rvert^2}
  \psi_0
  +
  \frac{\alpha^2}{1+\beta^2}
  \psi_n 
\\
& = 
  \frac{1}{\lvert{C_{\alpha,\beta}}\rvert^2}
  \left\{
  (\Lambda_{\alpha,\beta}^\ast)^{n+1}\Lambda_{\alpha,\beta}
  +
  n\frac{\lambda}{a}
  (\Lambda_{\alpha,\beta}^\ast)^n
  \right\}
  \psi_0
  +
  \frac{\alpha^2}{1+\beta^2}
  \psi_n
\\
& =
  \frac{n\lambda}{a\lvert{C_{\alpha,\beta}}\rvert^2}
  (\Lambda_{\alpha,\beta}^\ast)^n
  \psi_0
  +
  \frac{\alpha^2}{1+\beta^2}
  \psi_n
\\
& =
  \left\{
  \frac{n\lambda}{a\lvert{C_{\alpha,\beta}}\rvert^2}
  +
  \frac{\alpha^2}{1+\beta^2}
  \right\}
  \psi_n
  =
  \frac{\alpha^2}{1+\beta^2}
  (2n+1)
  \psi_n.
\end{align*}
This completes the proof. 
\end{proof}
Here we introduce a family of functions $\{\Psi_n\}_{n=0}^\infty$ on $\mathbb{R}$ 
by setting $\Psi_n=B_1^\ast\psi_n$ for $n=0,1,2,\dotsc$. 
In order to study basic properties on $\{\Psi_n\}_{n=0}^\infty$, 
we introduce notation. 
Set 
$$
P_{\alpha,\beta}
:=
\frac{1}{\overline{C_{\alpha,\beta}}}
B_1^\ast \circ \Lambda_{\alpha,\beta} \circ B_1,
\quad
P_{\alpha,\beta}^\ast
:=
\frac{1}{C_{\alpha,\beta}}
B_1^\ast \circ \Lambda_{\alpha,\beta}^\ast \circ B_1, 
\quad
H_{\alpha,\beta}
:=
P_{\alpha,\beta}^\ast \circ P_{\alpha,\beta}
+
\frac{\alpha^2}{1+\beta^2} 
$$
for short. 
Then we have 
$$
P_{\alpha,\beta}
=
\frac{d}{dx}
+
\frac{\alpha^2+i(\alpha^2+\beta^2+1)\beta}{1+\beta^2}x,
\quad
P_{\alpha,\beta}^\ast
=
-
\frac{d}{dx}
+
\frac{\alpha^2-i(\alpha^2+\beta^2+1)\beta}{1+\beta^2}x,
$$
\begin{align*}
  H_{\alpha,\beta}
& =
  -
  \frac{d^2}{dx^2}
  +
  \frac{\alpha^4+(\alpha^2+\beta^2+1)\beta^2}{(1+\beta^2)^2}x^2
  -
  2i
  \frac{(\alpha^2+\beta^2+1)\beta}{1+\beta^2}
  x\frac{d}{dx}
  -
  i
  \frac{(\alpha^2+\beta^2+1)\beta}{1+\beta^2}
\\
& =
  \operatorname{Op}^\text{W}
  \left(
  \xi^2
  +
  \frac{\alpha^4+(\alpha^2+\beta^2+1)\beta^2}{(1+\beta^2)^2}x^2
  +
  2
  \frac{(\alpha^2+\beta^2+1)\beta}{1+\beta^2}
  x\xi
  \right).
\end{align*}
Set 
$$
A_{\alpha,\beta}
=
\pi^{1/4}
\left\{
\frac{\alpha^2+\beta^2+1}{1-i\beta}
\right\}^{1/2}, 
$$
where we take its argument in $(-\pi/4,\pi/4)$.  
Our results in the present section are the following. 
\begin{theorem}
\label{theorem:PSIN}
\quad
\begin{itemize}
\item[{\rm (i)}] 
We have for $n=0,1,2,\dotsc$ 
\begin{align*}
  \Psi_n(x)
& =
  A_{\alpha,\beta}
  (-C_{\alpha,\beta})^n
  \exp\left(
      -i
      \frac{(\alpha^2+\beta^2+1)\beta}{2(1+\beta^2)}
      x^2
      \right)
\\
& \times
  \exp\left(
      \frac{\alpha^2}{2(1+\beta^2)}
      x^2
      \right) 
  \left(\frac{d}{dx}\right)^n
  \exp\left(
      -
      \frac{\alpha^2}{1+\beta^2}
      x^2
      \right). 
\end{align*}
In particular, 
$$
\Psi_0(x)
=
A_{\alpha,\beta}
\exp\left(
    -
    \frac{\alpha^2+i(\alpha^2+\beta^2+1)\beta}{2(1+\beta^2)}
    x^2
    \right). 
$$
\item[{\rm (ii)}] 
$\{\Psi_n\}_{n=0}^\infty$ is a family of eigenfunctions of $H_{\alpha,\beta}$, 
that is, 
$$
H_{\alpha,\beta}\Psi_n
=
\frac{\alpha^2}{1+\beta^2}
(2n+1)
\Psi_n,
\quad
n=0,1,2,\dotsc.
$$
\item[{\rm (iii)}] 
$\{\Psi_n\}_{n=0}^\infty$ 
is a complete orthogonal system of $L^2(\mathbb{R})$.  
\end{itemize}
\end{theorem}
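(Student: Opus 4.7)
The plan is to transport the three assertions of Theorem~\ref{theorem:psin} through the unitary isomorphism $B_1 : L^2(\mathbb{R}) \to \mathscr{H}_B(\mathbb{C})$, exploiting $B_1^* B_1 = \mathrm{id}_{L^2}$ and the fact that $B_1 \circ B_1^*$ acts as the identity on $\mathscr{H}_B(\mathbb{C})$. Assertion (iii) is then immediate: since $B_1^*$ is an isometric isomorphism from $\mathscr{H}_B(\mathbb{C})$ onto $L^2(\mathbb{R})$, the image $\{\Psi_n\}_{n=0}^\infty = \{B_1^*\psi_n\}_{n=0}^\infty$ of the complete orthogonal system from Theorem~\ref{theorem:psin}(iii) is a complete orthogonal system of $L^2(\mathbb{R})$.

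For (ii), I would unfold the definition of $H_{\alpha,\beta}$: substituting the expressions for $P_{\alpha,\beta}$ and $P_{\alpha,\beta}^*$ and using $B_1 B_1^* = \mathrm{id}$ on $\mathscr{H}_B(\mathbb{C})$ gives
$$H_{\alpha,\beta} = B_1^*\circ \left(\frac{\Lambda_{\alpha,\beta}^*\,\Lambda_{\alpha,\beta}}{|C_{\alpha,\beta}|^2} + \frac{\alpha^2}{1+\beta^2}\right)\circ B_1.$$
Applying this to $\Psi_n = B_1^*\psi_n$ (and using $B_1 B_1^*\psi_n = \psi_n$ since $\psi_n \in \mathscr{H}_B(\mathbb{C})$) reduces the claim directly to Theorem~\ref{theorem:psin}(iv).

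The substance of the proof is in (i). First, I would compute $\Psi_0 = B_1^*\psi_0$ explicitly by writing it as a two-dimensional Gaussian integral against $\psi_0(z) = e^{-az^2/4}$ and the weight $e^{-|z|^2/2}$, combining the exponents (Lemma~\ref{theorem:psizero}(ii) is the clean way to see $-|\zeta|^2/(\alpha^2+\beta^2+1)$ emerge), then performing a complex change of variables and completing squares in the spirit of the proof of Theorem~\ref{theorem:rodrigues202}; the constant $A_{\alpha,\beta}$ with argument in $(-\pi/4,\pi/4)$ is dictated by the branch of the Gaussian normalization. For general $n$, I would transfer the raising identity $\psi_n = (\Lambda_{\alpha,\beta}^*)^n\psi_0$ to the real side: from $P_{\alpha,\beta}^* = C_{\alpha,\beta}^{-1}B_1^*\circ\Lambda_{\alpha,\beta}^*\circ B_1$ and $B_1 B_1^* = \mathrm{id}$ on $\mathscr{H}_B(\mathbb{C})$, iteration gives $\Psi_n = C_{\alpha,\beta}^n (P_{\alpha,\beta}^*)^n\Psi_0$. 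Finally, factoring $P_{\alpha,\beta}^* = -e^{\gamma x^2/2}(d/dx)\, e^{-\gamma x^2/2}$ with $\gamma = (\alpha^2 - i(\alpha^2+\beta^2+1)\beta)/(1+\beta^2)$ yields $(P_{\alpha,\beta}^*)^n = (-1)^n e^{\gamma x^2/2}(d/dx)^n e^{-\gamma x^2/2}$, and substituting the formula for $\Psi_0$ — noting that $-\gamma/2$ cancels the imaginary part of the $\Psi_0$-exponent and doubles the real part to $-\alpha^2/(1+\beta^2)$ — produces the claimed Rodrigues formula.

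The main technical obstacle is the Gaussian integral evaluation of $\Psi_0$. The exponent is complex-quadratic in two real variables, and its negative-definiteness is visible only after the change of variable $\zeta = (\alpha + 1 - i\beta)z/2 + (\alpha - 1 - i\beta)\bar z/2$ identified by Lemma~\ref{theorem:psizero}, so the bookkeeping among the coefficients $a$, $\lambda$ and $C_{\alpha,\beta}$ must be carried out carefully and the correct branch of the square root in $A_{\alpha,\beta}$ selected. Every remaining step is an operator-algebraic manipulation that reduces to facts already established in Theorem~\ref{theorem:psin}.
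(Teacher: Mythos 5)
Your treatment of parts (i) and (ii) follows the paper's route essentially verbatim: the two-dimensional Gaussian integral for $\Psi_0=B_1^\ast\psi_0$ evaluated after completing the square (with the branch fixed as in Lemma~\ref{theorem:gauss}), the transfer $\Psi_n=(C_{\alpha,\beta})^n(P_{\alpha,\beta}^\ast)^n\Psi_0$ using $B_1\circ B_1^\ast=\mathrm{id}$ on $\mathscr{H}_B(\mathbb{C})$, the factorization of $P_{\alpha,\beta}^\ast$ as a conjugated derivative, and the reduction of (ii) to Theorem~\ref{theorem:psin}(iv). Those parts are sound.

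The gap is in (iii). You deduce the completeness of $\{\Psi_n\}_{n=0}^\infty$ from the completeness of $\{\psi_n\}_{n=0}^\infty$ asserted in Theorem~\ref{theorem:psin}(iii), but the proof of Theorem~\ref{theorem:psin} establishes only the orthogonality \eqref{equation:orthogonality} and explicitly defers the completeness; in the paper that completeness is itself \emph{derived from} part (i) of the present theorem. As written, your argument for (iii) is therefore circular. The missing step is the observation that the Rodrigues formula in (i) exhibits $\Psi_n$ as a nonzero constant multiple of the generalized Hermite function $\phi_n$ of Section~\ref{section:modified} for the parameter choice $A=i(1+\beta^2)/(2\alpha^2)$, $B=-i$, $C=\bigl((\alpha^2+\beta^2+1)\beta+i\alpha^2\bigr)/(1+\beta^2)$ (so that $\operatorname{Im}C=\alpha^2/(1+\beta^2)>0$ and $\operatorname{Re}C=(\alpha^2+\beta^2+1)\beta/(1+\beta^2)$, matching Theorem~\ref{theorem:rodrigues202}); Theorem~\ref{theorem:cons22} then gives the completeness of $\{\Psi_n\}_{n=0}^\infty$ in $L^2(\mathbb{R})$, and only afterwards does the unitarity of $B_1$ yield the completeness of $\{\psi_n\}_{n=0}^\infty$ in $\mathscr{H}_B(\mathbb{C})$. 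Absent that identification, you would need an independent density argument for $\operatorname{span}\{z^n\psi_0\}$ in $\mathscr{H}_B(\mathbb{C})$, which you do not supply.
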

Recall that Theorem~\ref{theorem:psin} was proved 
except for the completeness of $\{\psi_n\}_{n=0}^\infty$.  
Theorem~\ref{theorem:psin} without the completeness implies 
(ii) of Theorem~\ref{theorem:PSIN} and 
the orthogonality of $\{\Psi_n\}_{n=0}^\infty$ in $L^2(\mathbb{R})$. 
If (i) of Theorem~\ref{theorem:PSIN} holds, 
then the completeness of $\{\Psi_n\}_{n=0}^\infty$ in $L^2(\mathbb{R})$ 
follows immediately. 
Indeed. combining (i) of Theorem~\ref{theorem:PSIN} 
and the results in Section~\ref{section:modified} with 
$$
A=\frac{i(1+\beta^2)}{2\alpha^2},
\quad
B=-i,
\quad
C=\frac{(\alpha^2+\beta^2+1)\beta+i\alpha^2}{1+\beta^2},
$$
we can check the completeness of $\{\Psi_n\}_{n=0}^\infty$ in $L^2(\mathbb{R})$. 
This implies the completeness of 
$\{\psi_n\}_{n=0}^\infty$ in $\mathscr{H}_B(\mathbb{C})$ 
stated in Theorem~\ref{theorem:psin} 
since $B_1$ is a Hilbert space isomorphism of 
$L^2(\mathbb{R})$ onto $\mathscr{H}_B(\mathbb{C})$. 
For this reason, we have only to show (i) of Theorem~\ref{theorem:PSIN}. 
For this purpose, we need the following.
\begin{lemma}
\label{theorem:gauss}
Let $\rho>0$ and let $2\theta\in(-\pi/2,\pi/2)$. Then we have 
$$
\int_{\mathbb{R}}
\exp\Bigl(-\rho^2e^{2i\theta}t^2\Bigr)
dt
=
\frac{\sqrt{\pi}}{\rho e^{i\theta}}.
$$
\end{lemma}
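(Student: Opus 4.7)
The plan is to reduce the integral to the classical real Gaussian $\int_{\mathbb{R}} e^{-s^2}\,ds = \sqrt{\pi}$ by a change of variable that moves the contour from the real axis to the line $L_\theta := \{re^{i\theta} : r \in \mathbb{R}\}$, and to justify that move by Cauchy's theorem. Writing $s = \rho e^{i\theta} t$ formally gives $ds = \rho e^{i\theta}\,dt$ and
$$
\int_{\mathbb{R}} e^{-\rho^2 e^{2i\theta}t^2}\,dt = \frac{1}{\rho e^{i\theta}} \int_{L_\theta} e^{-s^2}\,ds,
$$
so the real content of the lemma is the identity $\int_{L_\theta} e^{-s^2}\,ds = \sqrt{\pi}$, with $L_\theta$ oriented in the direction of increasing $r$.

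To establish this, I would apply Cauchy's theorem to the entire function $f(s) = e^{-s^2}$ on the closed pie-slice contour $\Gamma_R$ with vertices $0, R, Re^{i\theta}$ (and its mirror for the negative half), or more symmetrically on the closed quadrilateral with vertices $-R, R, Re^{i\theta}, -Re^{i\theta}$. Since $f$ is entire, $\oint_{\Gamma_R} f(s)\,ds = 0$. The lemma then follows from taking $R \to \infty$ once I verify that the two circular arc pieces contribute vanishing amounts in the limit.

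The key estimate, and the only nontrivial step, is the decay on the arcs. Parametrize the arc from $R$ to $Re^{i\theta}$ by $s = Re^{i\varphi}$, $\varphi \in [0,\theta]$ (the case $\theta<0$ is analogous). Then
$$
\lvert e^{-s^2}\rvert = e^{-R^2 \cos(2\varphi)} \le e^{-R^2 \cos(2\theta)},
$$
and here is where the hypothesis $2\theta \in (-\pi/2,\pi/2)$ is used in an essential way: it forces $\cos(2\theta) > 0$, so the bound decays faster than any polynomial in $R$. Since the arc has length $R\lvert\theta\rvert$, its contribution is $O(R e^{-R^2 \cos(2\theta)}) \to 0$. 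The other arc, from $-R$ to $-Re^{i\theta}$, is handled identically.

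Passing to the limit $R \to \infty$ in $\oint_{\Gamma_R} f\,ds = 0$ yields $\int_{\mathbb{R}} e^{-s^2}\,ds = \int_{L_\theta} e^{-s^2}\,ds$, where the orientation on $L_\theta$ agrees with the one produced by the substitution $s = \rho e^{i\theta} t$. Combined with the well-known value $\sqrt{\pi}$ for the real Gaussian, this gives the claimed formula. The only subtlety beyond routine computation is the arc estimate, which I expect to be the main (and only real) obstacle; everything else is bookkeeping.
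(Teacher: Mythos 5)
Your proof is correct and follows essentially the same route as the paper: rotate the contour of $e^{-s^2}$ from the real axis to the ray of angle $\theta$ via Cauchy's theorem on a sector, with the arc contribution killed by the bound $\lvert e^{-s^2}\rvert\le e^{-R^2\cos(2\theta)}$, which is exactly where the hypothesis $2\theta\in(-\pi/2,\pi/2)$ enters. The only cosmetic difference is that the paper first uses evenness to reduce to $[0,\infty)$ before applying the sector contour, while you work with the full line (or its symmetric quadrilateral); the key estimate is identical.
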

\begin{proof}
The integrand is an even function of $t\in\mathbb{R}$. 
By using change of variable $t \mapsto \rho t$, we have 
$$
\int_{\mathbb{R}}
\exp\Bigl(-\rho^2e^{2i\theta}t^2\Bigr)
dt
=
\frac{2}{\rho}
\int_0^\infty
\exp\Bigl(-e^{2i\theta}t^2\Bigr)
dt. 
$$
Let $R>0$. Consider a contour $\gamma_R$ which consists of 
$\gamma_R=\gamma_R(1)\cup\gamma_R(2)\cup\gamma_R(3)$, where 
$$
\gamma_R(1)
=
\{t \ \vert \ t\in[0,R]\},
\quad
\gamma_R(3)
=
\{e^{i\theta}(R-t) \ \vert \ t\in[0,R]\},
$$
$$
\gamma_R(2)
=
\{Re^{it} \ \vert \ t\in[0,\theta]\}
\quad
(\theta\geqslant0),
\quad
\gamma_R(2)
=
\{Re^{i(\theta-t)} \ \vert \ t\in[\theta,0]\}
\quad
(\theta<0).
$$
Applying Cauchy's theorem to the holomorphic function $e^{-z^2}$ on $\gamma_R$, 
we have 
$$
e^{i\theta}
\int_0^R
\exp\Bigl(-e^{2i\theta}t^2\Bigr)
dt
=
\int_0^R
e^{-t^2}
dt
+
iR
\int_0^\theta
\exp\Bigl(-R^2e^{2it}+it\Bigr)
dt.
$$
Here we note that $0<\cos(2\theta)\leqslant1$ since $2\theta\in(-\pi/2,\pi/2)$. 
Then we deduce that 
\begin{align*}
  \Bigl\vert
  \exp\Bigl(-R^2e^{2it}+it\Bigr)
  \Bigr\vert
& =
  \exp\Bigl(-R^2\cos(2t)\Bigr)
  \leqslant
  \exp\Bigl(-R^2\cos(2\theta)\Bigr)
\\
& =
  \cfrac{1}{\displaystyle\sum_{k=0}^\infty\cfrac{\bigl(R^2\cos(2\theta)\bigr)^k}{k!}}
  \leqslant
  \frac{1}{R^2\cos(2\theta)},  
\end{align*}
and 
\begin{align*}
  \left\lvert
  iR
  \int_0^\theta
  \exp\Bigl(-R^2e^{2it}+it\Bigr)
  dt
  \right\rvert 
& \leqslant
  R
  \left\lvert
  \int_0^\theta
  \Bigl\lvert
  \exp\Bigl(-R^2e^{2it}+it\Bigr)
  \Bigr\rvert
  dt
  \right\rvert 
\\
& \leqslant
  R
  \left\lvert
  \int_0^\theta
  \frac{1}{R^2\cos(2\theta)}
  dt
  \right\rvert 
  =
  \frac{\lvert\theta\rvert}{R\cos(2\theta)}.
\end{align*}
Then we have 
$$
\int_0^R
\exp\Bigl(-e^{2i\theta}t^2\Bigr)
dt
=
\frac{1}{e^{i\theta}}
\int_0^R
e^{-t^2}
dt
+
\mathcal{O}\left(\frac{1}{R}\right)
\quad
(R \rightarrow \infty), 
$$
and 
$$
\int_0^\infty
\exp\Bigl(-e^{2i\theta}t^2\Bigr)
dt
=
\frac{1}{e^{i\theta}}
\int_0^\infty
e^{-t^2}
dt
=
\frac{\sqrt{\pi}}{2e^{i\theta}}. 
$$
This completes the proof.
\end{proof}
Finally we complete the proof of Theorem~\ref{theorem:PSIN}. 
\begin{proof}[{\bf Proof of Theorem~\ref{theorem:PSIN}}]
It suffices to show the part (i). 
We first compute the concrete form of $\Psi_0(x)$. 
Recall the definition of $\Psi_0(x)$  
$$
\Psi_0(x)
=
B_1^\ast\psi_0(x)
=
2^{-1/2}\pi^{-3/4}
\iint_{\mathbb{R}^2}
e^{G(y,\eta,x)}
dyd\eta,
$$
where
$$
G(y,\eta,x)
=
-
\frac{(y+i\eta)^2}{4}
+
(y+i\eta)x
-
\frac{x^2}{2}
-
\frac{a(y-i\eta)^2}{4}
-
\frac{y^2+\eta^2}{2}.
$$
Elementary computation gives
\begin{align*}
  G(y,\eta,x)
  =
& -
  \frac{\alpha^2+i(\alpha^2+\beta^2+1)\beta}{2(1+\beta^2)}
  x^2
\\
& -
  \frac{2\alpha^2+2\beta^2+1+i\beta}{2(\alpha^2+\beta^2+1)}
  (y-z_1)^2
  -
  \frac{1-i\beta}{2\alpha^2+2\beta^2+1+i\beta}
  (\eta-z_2)^2, 
\end{align*}
where 
$$
z_1
=
\frac{2x-i(1-a)\eta}{3+a},
\quad
z_2
=
i\frac{\alpha^2+\beta^2+i\beta}{1-i\beta}x.
$$
By using this and Lemma~\ref{theorem:gauss}, we deduce that 
\begin{align*}
  \Psi_0(x)
& =
  2^{-1/2}\pi^{-3/4}
  \exp\left(
      -
      \frac{\alpha^2+i(\alpha^2+\beta^2+1)\beta}{2(1+\beta^2)}
      x^2
      \right)
\\
& \times
  \int_{\mathbb{R}}
  \exp\left(
      -
      \frac{1-i\beta}{2\alpha^2+2\beta^2+1+i\beta}
      (\eta-z_2)^2
      \right)
  d\eta
\\
& \times
  \int_{\mathbb{R}}
  \exp\left(
      -
      \frac{2\alpha^2+2\beta^2+1+i\beta}{2(\alpha^2+\beta^2+1)}
      (y-z_1)^2
      \right)
  dy
\\
& =
  2^{-1/2}\pi^{-3/4}
  \exp\left(
      -
      \frac{\alpha^2+i(\alpha^2+\beta^2+1)\beta}{2(1+\beta^2)}
      x^2
      \right)
\\
& \times
  \int_{\mathbb{R}}
  \exp\left(
      -
      \frac{1-i\beta}{2\alpha^2+2\beta^2+1+i\beta}
      \eta^2
      \right)
  d\eta
\\
& \times
  \int_{\mathbb{R}}
  \exp\left(
      -
      \frac{2\alpha^2+2\beta^2+1+i\beta}{2(\alpha^2+\beta^2+1)}
      y^2
      \right)
  dy
\\
& =
  2^{-1/2}\pi^{-3/4}
  \times
  \left\{
  \frac{1-i\beta}{2\alpha^2+2\beta^2+1+i\beta}
  \cdot
  \frac{2\alpha^2+2\beta^2+1+i\beta}{2(\alpha^2+\beta^2+1)}
  \right\}^{-1/2}
  \pi
\\
& \times
  \exp\left(
      -
      \frac{\alpha^2+i(\alpha^2+\beta^2+1)\beta}{2(1+\beta^2)}
      x^2
      \right)
\\
& =
  A_{\alpha,\beta}
  \exp\left(
      -
      \frac{\alpha^2+i(\alpha^2+\beta^2+1)\beta}{2(1+\beta^2)}
      x^2
      \right),
\end{align*}
which is desired. 
\par
Finally we check the Rodrigues formula for $\Psi_n$. 
By using the definition of $\Psi_n$, we deduce that 
\begin{align*}
  \Psi_n(x)
& =
  B^\ast(\Lambda_{\alpha,\beta}^\ast)^n\psi_0(x)
  =
  (C_{\alpha,\beta}P_{\alpha,\beta}^\ast)^nB^\ast\psi_0(x)
  =
  (C_{\alpha,\beta})^n(P_{\alpha,\beta}^\ast)^n\Psi_0(x)
\\
& =
  (-C_{\alpha,\beta})^n
  \left(
  \frac{d}{dx}
  -
  \frac{\alpha^2-i(\alpha^2+\beta^2+1)\beta}{1+\beta^2}x
  \right)^n
  \Psi_0(x)
\\
& =
  (-C_{\alpha,\beta})^n
  \exp\left(
      \frac{\alpha^2-i(\alpha^2+\beta^2+1)\beta}{2(1+\beta^2)}x
      \right)
\\
& \times
  \left(\frac{d}{dx}\right)^n
  \left\{
  \exp\left(
      -
      \frac{\alpha^2-i(\alpha^2+\beta^2+1)\beta}{2(1+\beta^2)}x
      \right)
  \Psi_n(x)
  \right\}
\\
& =
  A_{\alpha,\beta}(-C_{\alpha,\beta})^n
  \exp\left(
      \frac{\alpha^2-i(\alpha^2+\beta^2+1)\beta}{2(1+\beta^2)}x
      \right)
  \left(\frac{d}{dx}\right)^n
  \exp\left(
      -
      \frac{\alpha^2}{1+\beta^2}x
      \right).
\end{align*}
This completes the proof.
\end{proof}
%
%

\end{document}